\def\Max{{\rm Max}}
\DeclarePairedDelimiter\floor{\lfloor}{\rfloor}
\newtheorem{rema}{Remark}
\newtheorem{claim}{Claim}
\newtheorem{lemma}{Lemma}
\newtheorem{corollary}{Corollary}
\newtheorem{prop}{Proposition}
\newtheorem{thm}{Theorem}
\def\M{{\cal M}}
\def\ds{\displaystyle}
\def \g   {\gamma}
\def \d   {\delta}
\def \eps {\varepsilon}
\def\E{{\mathbb{E}}}
\def\P{{\mathbb{P}}}
\def\R{{\mathbb{R}}}
\def\N{{\mathbb{N}}}
\def\tF{\tilde{\cal{F}}}
\newcommand{\supp}{\mathop{\mathrm{supp}}}
\def\|{\,|\, }
\title{A local barycentric version of the Bak-Sneppen model}
\author{Philip Kennerberg${}^*$ and Stanislav Volkov\footnote{Centre for Mathematical Sciences, Lund University, Box 118 SE-22100, Lund, Sweden}}
\begin{document}

\maketitle
\begin{abstract}
We study the behaviour of an interacting particle system, related to the Bak-Sneppen model and Jante's law process defined in~\cite{KV1}. Let $N\ge 3$ vertices be placed on a circle, such that each vertex has exactly two neighbours. To each vertex assign a real number, called {\em fitness}\footnote{we use this term, as it is quite standard for Bak-Sneppen models}. Now find the vertex which fitness deviates most from the average of the fitnesses of its two immediate neighbours (in case of a tie, draw uniformly among such vertices), and replace it by a random value drawn independently according to some distribution $\zeta$. We show that in case where $\zeta$ is a finitely supported or continuous uniform distribution, all the fitnesses except one converge to the same value.
\end{abstract}

\noindent{\sf Keywords}: Bak-Sneppen model, Jante's  law process, interacting particle systems.

\noindent{\sf Subject classification:} 60J05, 60K35,  91D10.

\section{Introduction}
The model we study in the current paper is a ``marriage" between  Jante's law process and the Bak-Sneppen model.

Jante's law process refers to the interacting particle model studied in~\cite{GVW} under the name ``Keynesian beauty contest process'', and generalized in~\cite{KV1}. This model runs as follows. Fix an integer $N\ge 3$, $d\ge 1$, and some $d$-dimensional random variable $\zeta$. Let the initial configuration consist of $N$ arbitrary points in $\R^d$. The process runs in discrete time according to the following algorithm: first, compute the centre of mass $\mu$  of the given configuration of~$N$ points; then replace the point which is the most distant from~$\mu$ by a new $\zeta-$distributed point drawn independently each time. It was shown in~\cite{GVW} that if~$\zeta$ has a uniform distribution on the unit cube, then all but one points converge to some random point in~$\R^d$. This result was further generalized in~\cite{KV1}, by allowing $\zeta$ to have an arbitrary distribution, and additionally removing not just $1$, but $K\ge 1$ points chosen to minimize a certain functional. The term ``Jante's law process" was also coined in~\cite{KV1}, to reflect that this process is reminiscent of the ``Law of Jante'' principle,  which describes patterns of group behaviour towards individuals within Scandinavian countries that criticises individual success and achievement as unworthy and inappropriate; in other words, it is better to be ``like everyone else''. The origin of this ``law" dates back to Aksel Sandemose ~\cite{AS}. Another modification of this model in one dimension, called the $p$-contest, was introduced in~\cite{GVW,HCW} and later studied e.g.\ in~\cite{KV2}. This model runs as follows: fix some constant $p\in(0,1)\cup(1,\infty)$, and replace the point which is the farthest from $p\mu$ (rather than $\mu$). 

Finally, we want to mention that the phenomenon of conformity is observed in many large social networks, see, for example, \cite{BDB,MN,TWS} and references therein.

Pieter Trapman (2018, personal communications) suggested to study  Jante's law model with {\em local interactions}, thus making it somewhat similar to the famous Bak-Sneppen (BS) model see e.g.~\cite{BS}. In the BS model,~$N$ species are located around a circle, and each of them is associated with a so-called ``fitness",  which is a real number. The algorithm consists in choosing the least fit individual, and then replacing it {\em and both of its two closest neighbours} by a new species, with a new random and independent fitness. After a long time, there will be a minimum fitness, below which species do not survive. The model proceeds through certain events, called ``avalanches", until it reaches a state of relative stability where all fitnesses are above a certain threshold level. There is a version of the model where fitnesses take only values $0$ and $1$ (see~\cite{BK} and~\cite{SVBS}), but even this simplified version turns out to be notoriously difficult to analyse, see e.g.~\cite{MZ}. Some more recent results can be found in~\cite{Ben,Veer}.

The barycentric Bak-Sneppen model, or, equivalently, Jante's law process with local interactions, is defined as follows. Unlike the classical Bak-Sneppen model, our model is based on some {\em local phenomena}, which makes it much more tractable mathematically, and hence we are able to obtain substantial rigorous results.

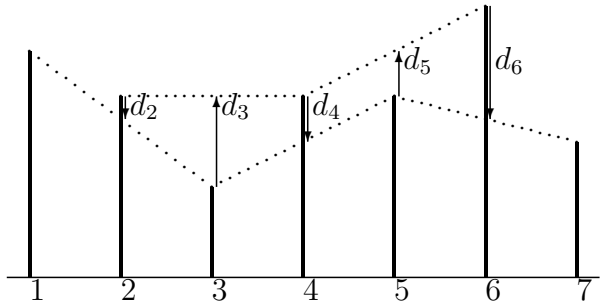
\begin{figure}
  \centering
\setlength{\unitlength}{6mm}
\begin{picture}(11,11)
\linethickness{0.2mm}
\put(-0.5,0){\line(1,0){13}}
\linethickness{0.5mm}
 \put(0,0){\line(0,1){5}}
 \put(2,0){\line(0,1){4}}
 \put(4,0){\line(0,1){2}}
 \put(6,0){\line(0,1){4}}
 \put(8,0){\line(0,1){4}}
\put(10,0){\line(0,1){6}}
\put(12,0){\line(0,1){3}}
\multiput(0,5)(0.2,-0.15){21}{\circle*{0.05}}
\multiput(2,4)(0.2,0)    {21}{\circle*{0.05}}
\multiput(4,2)(0.2,0.1)  {21}{\circle*{0.05}}
\multiput(6,4)(0.2,0.1)  {21}{\circle*{0.05}}
\multiput(8,4)(0.2,-0.05){21}{\circle*{0.05}}
\linethickness{0.2mm}
\put(2.1,4){\vector(0,-1){0.5}}
\put(4.1,2){\vector(0,1){2}}
\put(6.1,4){\vector(0,-1){1}}
\put(8.1,4){\vector(0,1){1}}
\put(10.1,6){\vector(0,-1){2.5}}
\put(2.2,3.6){$d_2$}
\put(4.2,3.6){$d_3$}
\put(6.2,3.6){$d_4$}
\put(8.2,4.6){$d_5$}
\put(10.2,4.6){$d_6$}
\put(0,-0.5){1}
\put(2,-0.5){2}
\put(4,-0.5){3}
\put(6,-0.5){4}
\put(8,-0.5){5}
\put(10,-0.5){6}
\put(12,-0.5){7}
\end{picture}
\caption{Illustration of the distances from the average of the two neighbours; $\jmath=6$.}
\label{fig1}
\end{figure}

Fix an integer $N\ge 3$, and let $S=\{1,2,\dots,N\}$ be the set of nodes uniformly spaced on a circle.  At time $t$, each node $i\in S$ has a certain ``fitness" $X_i(t)\in\R$; let $X(t)=(X_1(t),\dots,X_N(t))$.  Next, for the vector  $x=(x_1,\dots,x_N)$, define
$$
d_i(x)=\left|x_i-\frac{x_{i+1}+x_{i-1}}2\right|, 
$$
as the measure of local ``non-conformity'' of the fitness at node $i$ (here and further we will use the convention that   $N+1\equiv 1$, $N+2\equiv 2$, and $1-1\equiv N$ for indices on $x$). Let also $d(x)=\max_{i\in S} d_i(x)$.

The process runs as follows. Let $\zeta$ be some fixed one-dimensional random variable. At time $t$, $t=0,1,2,\dots$, we chose the ``least conformist node''\footnote{The intuition for choosing the deviance as the criteria for removal is the follows. In many Scandinavian countries, non-conformity is considered as a very bad treat, and as a result, individuals which divert from the average, tend to be less successful in these societies. This phenomenon is called ``The Jante's Law''. We understand that the word ``fitness'' is thus somewhat misleading here, but would like to use it to keep in line with the standard Bak-Sneppen model.} $i$, i.e.\ the one maximizing $d_i(X(t))$, and replace it by a $\zeta$-distributed random variable. By $\jmath(x)$ we denote the index of such a node in the configuration $x=(x_1,\dots,x_N)$, that is
$$
d_{\jmath(x)}(x)=d(x)
$$
(see Figure~\ref{fig1}). If there is more than one such node, we choose any of them with equal probability, thus $\jmath(x)$ is, in general, a random variable. Also assume that all the coordinates of the initial configuration $X(0)$ lie in the support of $\zeta$. We are interested in the long-term dynamics of this process.

We start with a somewhat easier version of the problem, where $\zeta$ takes finitely many distinct values (Section~\ref{Sec1}), and then extend this result to the case where $\zeta\sim U[0,1]$ (Section~\ref{Sec2}). We will show that all the fitnesses (except the one which has just been updated) converge to the same (random) value. This will hold for each of the two models.

\begin{rema}\label{rem1}
One can naturally extend this model to any finite connected non-oriented graph $G$ with vertex set $V$, as follows. For any two vertices $v,u\in V$ that are connected by an edge we write $u\sim v$.  To each vertex $v$ assign a fitness $x_v\in\R$, and define the measure of non-conformity of this vertex  as
$$
d_v(x)=\left|x_v-\frac{\sum_{u:\ u\sim v} x_u}{N_v}\right|, 
$$
where $N_v=|{u\in V:\, u\sim v}|$ denotes the number of neighbours of $v$, and the replacement algorithm runs exactly as it is described earlier. 

In particular, if $G$ is a cycle graph, we obtain the model studied in the current paper. On the other hand, if $G$ is a complete graph, we obtain the model equivalent to that studied in~\cite{GVW, KV1}.
\end{rema}

\begin{rema}\label{rem2}
Unfortunately,  our results cannot be extended to a general model, described in Remark~\ref{rem1}.
Indeed, assume that $\supp\zeta=\{0,1\}$. It is not hard to show that if for some $v$ we have $N_v=1$, then the statement of Theorem~\ref{thm-dc} does not have to hold. 

Moreover, it turns out that even when all the vertices have at least two neighbours (i.e., $N_v\ge 2$ for all $v\in V$), then there are still counterexamples: please see Figure~\ref{figgengraph}.
\end{rema}

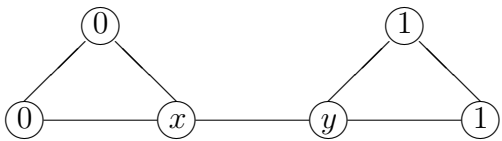
\begin{figure}  \centering
\setlength{\unitlength}{5mm}
\begin{picture}(10,5)
\put(0,0){\circle{1}}
\put(4,0){\circle{1}}
\put(8,0){\circle{1}}
\put(12,0){\circle{1}}
\put(2,2.5){\circle{1}}
\put(10,2.5){\circle{1}}
\put(0.5,0){\line(1,0){3}}
\put(4.5,0){\line(1,0){3}}
\put(8.5,0){\line(1,0){3}}
\put(0,0.5){\line(1,1){1.6}}
\put(8,0.5){\line(1,1){1.6}}
\put(4,0.5){\line(-1,1){1.6}}
\put(12,0.5){\line(-1,1){1.6}}     
\put(-0.2,-0.2){$0$}
\put(3.8,-0.2){$x$}
\put(7.8,-0.2){$y$}
\put(11.8,-0.2){$1$}
\put(1.8,2.3){$0$}
\put(9.8,2.3){$1$}
\end{picture}
\caption{On this graph with $N=6$ vertices, only values $x$ and $y\in\{0,1\}$ are updated all the time; infinitely often half of the fitnesses equal $0$, while the other half equals $1$.}
\label{figgengraph}
\end{figure}

The rest of the paper is organized as follows.
In Section~\ref{Sec1} we study the easier, discrete, case. We show the convergence by explicitly finding all the absorbing classes for the finite-state Markov chain.

Section~\ref{Sec2} contains the main result of our paper, Theorem~\ref{thm_main}, which shows that all but one fitness converge to the same (random) limit, similarly to the main result of~\cite{GVW}.

\newpage

\section{Discrete case}\label{Sec1}
In this Section we study the case when fitnesses take finitely many values, equally spaced between each other. Due to the shift- and scale-invariance of the model, without loss of generality we may assume that $\supp \zeta=\{1,2,\dots,M\}=:\M$, and that $\ds p=\min_{j\in \M} \P(\zeta=j)>0$. In this case~$X(t)$ becomes a finite state-space Markov chain on $\M^N$.

Note that if $N-1$ fitnesses coincide and are equal to some $a\in\M$, then it is the fitness that differs from $a$ that will keep being replaced, until it finally coincides with the others. When this happens, we will have to choose randomly one among all the vertices, and replace its fitness. The replaced fitness may or may not differ from $a$, and then this procedure will repeat over and over again.
Hence, to simplify the rest of the argument, we can (and will) safely modify the process as follows:
$$
X(t+1)\equiv X(t)\text{ as soon as }d(X(t))=0\text{ i.e.\ all $X_i(t)=a$ for some $a\in\M$.}
$$
We will say that the process that the process is {\em absorbed} at value $a$.

\begin{rema}
The fact that the values of $\zeta$ are equally spaced is, surprisingly, crucial.
Let $\supp\zeta=\{0,1,5,6\}=:\M$ and $N=8$. Then the set of configurations
$$
[0,1,x,5,6,5,y,1],\quad x,y\in\M
$$
is stable; the maximum distance from the average of the fitnesses of the neighbours is always at nodes $3$ or $7$, and it equals $2$ or $3$, while the other distances are at most  $1.5$ or $2$ respectively.
\end{rema}

\begin{thm}\label{thm-dc}
The process $X(t)$ gets absorbed at some value $a\in\M$,
regardless of its starting configuration $X(0)\in \M^N$.
\end{thm}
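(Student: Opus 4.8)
The plan is to show that from any configuration the Markov chain $X(t)$ has a positive probability, bounded below uniformly, of reaching an absorbing state (one with all coordinates equal) within a bounded number of steps; since the chain has finite state space, this forces absorption almost surely. The key is to exhibit, for every non-absorbed configuration $x \in \M^N$, an explicit finite sequence of updates — each of which has probability at least $p$ of occurring, because at each step the value $\zeta$ lands on the specific point of $\M$ we want — that drives the chain into the absorbing set. Because $|\M^N| < \infty$ and the required number of steps can be bounded in terms of $N$ and $M$ only, the standard argument (the chain cannot avoid a fixed positive-probability event forever) finishes the proof.

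The heart of the matter is constructing such a driving sequence. First I would argue that it suffices to reach a configuration with $N-1$ equal coordinates: by the remark preceding the theorem, once $N-1$ fitnesses equal some $a \in \M$, the lone discrepant node is the unique maximizer of $d_i$ and keeps being resampled; with probability at least $p$ it becomes $a$ as well, at which point the (modified) process is absorbed. So the real target is the set $\mathcal{A}_{N-1}$ of configurations with at least $N-1$ coordinates equal. To reach $\mathcal{A}_{N-1}$, the natural strategy is to ``grow a plateau'': one shows that whenever the current configuration is not already in $\mathcal{A}_{N-1}$, there is a value $a \in \M$ and a node such that, if that node is the one updated and $\zeta$ takes value $a$, the configuration moves closer (in a suitable sense — e.g.\ the number of coordinates equal to $a$, or the length of the longest run of equal adjacent coordinates, strictly increases, or $d(x)$ strictly decreases while no plateau is destroyed). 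The crucial input here is the equally-spaced structure of $\M$: if we aim for a contiguous block of coordinates all equal to $a$ and resample the first node just outside the block, the arithmetic constraint $d_i(x) = |x_i - (x_{i-1}+x_{i+1})/2| \le M-1$ together with integrality means a careful choice lets us extend the block without creating a larger non-conformity elsewhere. I would make this precise by working with the longest run $R(x)$ of consecutive equal fitnesses and showing $R$ can be increased by one in a bounded number of resamplings of probability $\ge p$ each, until $R(x) = N-1$.

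The main obstacle I anticipate is the book-keeping in the plateau-growing step: when we resample a boundary node to match the block, we must simultaneously ensure that this node (now conformist) is not re-selected on the next step, i.e.\ that $d$ is attained elsewhere, and that the newly created large deviations outside the block are themselves reparable. This is exactly where the equal spacing of $\M$ (and the explicit failure shown in the non-equally-spaced remark) is used, so the argument must be genuinely combinatorial about which node is the maximizer at each stage, rather than a soft compactness argument. A clean way to organize this — and the route I would take — is to identify directly, for each absorbing-class candidate, the structure of configurations from which the plateau cannot be extended, and show by the equally-spaced arithmetic that the only such ``stuck'' configurations are the absorbing ones themselves; combined with finiteness of the state space this yields that the only closed communicating classes are the $M$ singletons $\{(a,a,\dots,a)\}$, which is the assertion of the theorem.
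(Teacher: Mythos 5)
Your overall outer framework is sound and matches the paper's: since the chain is finite-state with absorbing set $\mathrm{O} = \{(a,\dots,a) : a \in \M\}$, it suffices to exhibit, from every starting configuration, a finite sequence of positive-probability updates reaching $\mathrm{O}$. You also correctly observe the first reduction (reaching a state with $N-1$ equal coordinates implies the lone outlier is the unique maximizer of $d_i$ and will then be resampled until it conforms).

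However, the heart of your argument --- the ``plateau-growing'' lemma --- is where the real difficulty lies, and it is left unproved. You write that one should ``resample the first node just outside the block'' and that a careful choice of value ``lets us extend the block without creating a larger non-conformity elsewhere,'' but you do not get to choose \emph{which} node is resampled: the process always replaces the node maximizing $d_i$, which need not be adjacent to the longest run. Worse, the maximizer of $d_i$ is frequently a \emph{boundary} node of the longest plateau (e.g.\ $x = (5,5,5,1,\dots)$, where the third ``$5$'' has $d_3 = 2$), so the forced update can destroy the run rather than extend it; your proposed progress measures (longest run $R(x)$, or number of coordinates equal to $a$) are not monotone under the forced update, even for a well-chosen replacement value. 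You flag this obstacle yourself and suggest the resolution is to show that the only configurations from which no progress is possible are the absorbing ones, but that is precisely the nontrivial combinatorial content of the theorem and is not established here. The paper circumvents this difficulty entirely by introducing the Lyapunov function $f(x) = \sum_i (x_i - x_{i+1})^2$: replacing \emph{whichever} node is the maximizer by $\lfloor (x_{i-1}+x_{i+1})/2 \rfloor$ never increases $f$, and strictly decreases it by at least $1$ whenever $d_i \ge 1$ (Claim~\ref{ClaimfAt}); the only subtle case $d = 1/2$ is handled by showing the set of maximal coordinates strictly shrinks. This sidesteps any need to control where the maximizer sits, which is exactly the control your plateau approach lacks. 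As written, your proposal is a plan identifying the difficulty rather than a proof resolving it.
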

First, observe that since $X(t)$, $t=0,1,2,\dots$ is a finite-state Markov chain on $\M^N$ with the set of absorbing states 
$$
{\rm O}=(1,1,\dots,1) \cup (2,2,\dots,2) \cup\dots (M,M,\dots,M) \subset \M^N
$$
it suffices to show that $\rm O$ is accessible (can be reached with a positive probability in some number of steps) from any starting configuration $X(0)$.

First, for $x=(x_1,x_2,\dots,x_N)\in\M^N$,  define
\begin{align*}
\Max(x)&=\max_{1\le i\le N} x_i,\\
S(x)&=\left\{j\in\{1,2,\dots,N\}:\ x_j=\Max(x)\right\}.
\end{align*}
that is, the maximum of $x$, and the indices of $x$ where this maximum is achieved\footnote{for example, if $x=(1,4,2,4,4,2)$ then $\Max(x)=4$, $S(x)=\{2.4.5\}$.}.
Let us also define 
$$
f(x)=\sum_{i=1}^N (x_i-x_{i+1})^2
$$
with the convention $x_{N+1}\equiv x_1$, which we will use as some sort of Lyapunov function.
The following two algebraic statements are not difficult to prove.
\begin{claim}\label{Claimf=d}
$f(x)=0$ if and only if $d(x)=0$. 
\end{claim}
\begin{proof}
Let $x=(x_1,\dots,x_N)$. One direction is trivial:
if $f(x)=0$, then $x_i\equiv x_1$ for all $i\in S$ and hence $d_i(x)=0$ for all $i\in S$ $\Longleftrightarrow d(x)=0$. 

On the other hand, suppose that $d_i(x)=0$ for all $i$. If not all $x_i$'s are equal, there must be an index $j$ for which $x_j=\max_{i\in S}x_i$, and either $x_{j-1}< x_j$ or $x_{j+1}< x_j$. This, in turn, implies that
$
2d_j(x)=|(x_j -x_{j-1})+(x_j-x_{j+1})|=(x_j -x_{j-1})+(x_j-x_{j+1})>0
$
yielding a contradiction.
\end{proof}

\begin{claim}\label{ClaimfAt}
Let $x=(x_1,\dots,x_{i-1},x_i,x_{i+1},\dots,x_N)$ and 
and $x'=(x_1,\dots,x_{i-1},a,x_{i+1},\dots,x_N)$ where $a=\floor*{\frac{x_{i-1}+x_{i+1}}{2}}$. Then 
\begin{itemize}
\item[(a)] $f(x')\le f(x)$;
\item[(b)] if additionally $d_i(x)\ge 1$ then $f(x')\le f(x)-1$. 
\end{itemize}
\end{claim}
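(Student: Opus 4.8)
\emph{Proof plan.} The plan is to exploit the fact that replacing $x_i$ by $a$ changes only the two terms of $f$ that contain the coordinate $x_i$, namely $(x_{i-1}-x_i)^2$ and $(x_i-x_{i+1})^2$, so that the whole claim reduces to a one-variable estimate for a quadratic.

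First I would set $u:=x_{i-1}$, $v:=x_{i+1}$, $m:=\tfrac{u+v}{2}$ and $g(t):=(u-t)^2+(t-v)^2$, so that
$$f(x)-f(x')=g(x_i)-g(a).$$
Completing the square gives $g(t)=2(t-m)^2+\tfrac12(u-v)^2$, hence
$$f(x)-f(x')=2(x_i-m)^2-2(a-m)^2.$$
Since $d_i(x)=|x_i-m|$ by definition and $a=\lfloor m\rfloor$, both parts of the claim become statements comparing $|x_i-m|$ with $|a-m|$.

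The only delicate point --- and the sole place where it matters that $\M$ consists of equally spaced (integer) values --- is the parity of $u+v$. As $u,v\in\Z$, either $m\in\Z$, in which case $a=m$ and $|a-m|=0$; or $m\in\Z+\tfrac12$, in which case $a=m-\tfrac12$, $|a-m|=\tfrac12$, and moreover $x_i\in\Z$ forces $|x_i-m|\in\{\tfrac12,\tfrac32,\tfrac52,\dots\}$. In both cases $|x_i-m|\ge|a-m|$, which proves (a). For (b), assume $d_i(x)=|x_i-m|\ge1$. If $m\in\Z$ then $(x_i-m)^2\ge1$ and $(a-m)^2=0$, so $f(x)-f(x')\ge2$; if $m\in\Z+\tfrac12$ then $|x_i-m|$ is a half-integer that is $\ge1$, hence $\ge\tfrac32$, so $(x_i-m)^2\ge\tfrac94$ while $(a-m)^2=\tfrac14$, giving $f(x)-f(x')=2\bigl(\tfrac94-\tfrac14\bigr)=4$. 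Either way $f(x')\le f(x)-1$.

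I do not anticipate any real obstacle: once one notices that $f$ only changes locally, everything is elementary. The one thing to be careful about is not to drop the case $u+v$ odd, where the floor performs a genuine rounding, and where the half-integer arithmetic above is exactly what keeps both inequalities valid (overlooking it would leave the stated bounds unproved, even though they remain true).
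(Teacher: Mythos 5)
Your proposal is correct and follows essentially the same route as the paper's proof: both reduce the global change in $f$ to the local two-term quadratic in the replaced coordinate, arrive at the identity $f(x)-f(x')=2\,d_i(x)^2-2\,d_i(x')^2$ (your $|x_i-m|$ and $|a-m|$ are exactly $d_i(x)$ and $d_i(x')$), and then finish by the integer/half-integer case split on the possible values of $d_i$. The only cosmetic difference is that you display the parity discussion explicitly, while the paper phrases it directly in terms of the attainable values $d_i(x)\in\{0,\tfrac12,1,\tfrac32,\dots\}$ with $d_i(x')\le\tfrac12$.
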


\begin{rema}
One may expect that there are simpler Lyapunov functions; while we cannot rule this out, let us illustrate two natural candidates that, unfortunately, fail. First, consider $d(x)$; however this function does not work as the next example shows.
Let $x= [1, 3, 9, 18, 24, 27, 27, 24, 18, 9, 3, 1]$. Then $d_i(x)$ is the largest at $i=2$ and $i=11$; thus $d(x)=d_2(x)=2$. If we replace a ``3'' by ``4''$=(1+9)/2$, then
$x'= [1, 4, 9, 18, 24, 27, 27, 24, 18, 9, 3, 1]$ so $d(x')=d_3(x)=2.5>d(x)$.

Another possible candidate, \ $\tilde f(x)=\sum_i d_i(x)^2$ does not work either: let $x=[1,6,9,6,1]$, then $x'=[1,6,6,6,1]$ and $\tilde f(x')>  \tilde f(x)$, so it is not a Lyapunov function either.
\end{rema}

\begin{proof}[Proof of Claim~\ref{ClaimfAt}].
From simple algebra it follows that
\begin{align*}
\frac{f(x')-f(x)}2&=(a-x_i)(a+x_i-x_{i-1}-x_{i+1})
\\ &
=\left(a-\frac{x_{i-1}+x_{i+1}}2\right)^2
-\left(x_i-\frac{x_{i-1}+x_{i+1}}2\right)^2
=d_i(x')^2-d_i(x)^2=:(*).
\end{align*}
Note that if $d_i(x)=0$ or $d_i(x)=1/2$ , then $d_i(x')=d_i(x)$ and thus $(*)=0$.
On the other hand, if $d_i(x)\ge 1$, since $d_i(x')\le 1/2$, we get $(*)\le - 1/2$.
\end{proof}

To simplify notations, denote
$$
\jmath_t=\jmath(X(t)),\qquad {\Delta}_t=d(X(t)),\qquad f_t=f(X(t)).
$$
Now we are going to construct an explicit path through which $X(t)$ can reach $\rm O$ starting from any initial state.
%Assume that $X(t)\not\in {\rm O} \Longleftrightarrow \Delta_t\ge 1/2$. 
Let
\begin{align*}
A_t=\left\{ \right.&X_{\jmath_t}(t)\text{ is replaced by }X_{\jmath_t}(t+1)=\floor*{\frac{X_{\jmath_t-1}(t)+X_{\jmath_t+1}(t)}{2}},\\
& \left.
\text{ and }\jmath_t\in S(X(t))\text{ if possible}\right\}.
\end{align*}
Note that the second condition is always possible to satisfy when $\Delta_t=1/2$. Indeed,  if $\Delta_t=1/2$ for $X(t)=x$, then there must be a $j$ such that $x_j=\Max(x)$ but $x_{j+1}\le \Max(x)-1$.
As a result, $d_j(x)\ge 1/2$ and hence~$x_j$ is one of the points which can be potentially replaced.% (i.e., with a highest value of $d_i(x)$.)

Now the statement of Theorem~\ref{thm-dc} will follow from the following Lemma.
\begin{lemma}\label{lem-dc}
For any $X(0)$ there is a $T\ge 0$ such that on the event
$$
A_0\cap A_1\cap ... \cap A_T
$$
we have $X(T)\in {\rm O}$.
\end{lemma}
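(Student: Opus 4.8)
My plan is to show that, under repeated application of the favourable moves $A_t$, the Lyapunov function $f_t$ strictly decreases until either the process is absorbed or the configuration reaches a state where $\Delta_t = 1/2$, and then to handle the $\Delta_t=1/2$ regime separately by exploiting the freedom we reserved in the definition of $A_t$ (namely that we may insist $\jmath_t\in S(X(t))$). The basic engine is Claim~\ref{ClaimfAt}(b): as long as $\Delta_t\ge 1$, the move $A_t$ replaces $X_{\jmath_t}(t)$ by the rounded average of its neighbours and forces $f_{t+1}\le f_t-1$. Since $f$ is a non-negative integer-valued function on $\M^N$ (all coordinates are integers), it cannot decrease by $1$ indefinitely; hence after finitely many steps on $A_0\cap A_1\cap\cdots$ we must reach a time $\tau$ at which $\Delta_\tau\in\{0,1/2\}$. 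If $\Delta_\tau=0$ we are done by Claim~\ref{Claimf=d} and the absorption convention, with $T=\tau$.

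The remaining work is the case $\Delta_\tau = 1/2$, and this is where I expect the main difficulty. When $\Delta_t=1/2$, Claim~\ref{ClaimfAt} only gives $f_{t+1}\le f_t$, so $f$ alone no longer forces progress; we need a secondary argument. Here I would use the observation already highlighted in the excerpt: when $\Delta_t=1/2$, there is always a node $j$ with $x_j=\Max(x)$ and $x_{j+1}\le \Max(x)-1$, so $\Max(x)$ is attained by a node that is a legitimate candidate for replacement, and on $A_t$ we replace such a node by $\lfloor (x_{j-1}+x_{j+1})/2\rfloor \le \Max(x)-1$ (using $d_j\ge 1/2$ together with integrality of the neighbours to see the floor drops strictly below the max). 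Thus on $A_t$ the quantity $\Max(X(t))$ is non-increasing, and moreover the number of indices at which the current maximum is attained, $|S(X(t))|$, strictly decreases whenever we pick such a $j$ — unless that maximum block shrinks to nothing, in which case $\Max$ itself drops. Iterating, after finitely many further steps $\Max(X(t))$ strictly decreases; and since $\Max$ is integer valued and bounded below (all coordinates stay in $\M$ because $\lfloor(x_{i-1}+x_{i+1})/2\rfloor$ of two elements of $\{1,\dots,M\}$ is again in $\{1,\dots,M\}$, and $\zeta$ is never sampled on these moves), this process terminates. I would phrase this as a nested induction, or equivalently use the lexicographic quantity $(\Max(X(t)), |S(X(t))|)$, decreasing in lexicographic order until $\Delta=0$.

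To assemble the two phases: run the $f$-decrease argument until the first time $\tau$ with $\Delta_\tau\le 1/2$; if $\Delta_\tau=1/2$, switch to the $(\Max,|S|)$-decrease argument, during which $f$ never increases so we never return to the $\Delta\ge 1$ regime without $f$ having strictly dropped — in fact the cleanest bookkeeping is to note that the combined potential "$f_t$ plus a bounded correction term counting $(\Max,|S|)$" is a genuine Lyapunov function that strictly decreases at every step on $A_0\cap A_1\cap\cdots$ until absorption, so some finite $T$ works. The one point requiring care, and the main obstacle, is verifying that in the $\Delta=1/2$ phase a move of type $A_t$ really is available that decreases $(\Max,|S|)$ — i.e., that the maximizing candidate node $j$ can indeed be chosen as $\jmath_t$ (it has $d_j\ge 1/2=\Delta_t$, so it is a maximizer of $d$, hence an admissible choice) and that after replacement the new value is $\le \Max-1$ rather than merely $\le\Max$; this is exactly the place where "equally spaced values" / integrality of $\M$ is used, consistent with the Remark warning that the claim fails for $\supp\zeta=\{0,1,5,6\}$.
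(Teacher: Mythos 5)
Your proposal is correct and follows essentially the same route as the paper's: drop $f$ via Claim~\ref{ClaimfAt}(b) whenever $\Delta_t\ge 1$, and use the observation that on $A_t$ a maximal node with a strictly smaller neighbour is admissible and is replaced by a value $\le \Max-1$, so $|S(X(t))|$ strictly decreases during any $\Delta_t=1/2$ run. The paper packages this as Claim~\ref{ClaimB_t} (within every window of $N-2$ steps on $\bigcap A_t$, $f$ decreases by at least $1$), tracking only $|S|$ — which is always between $2$ and $N-2$ when $\Delta_t=1/2$, so $\Max$ never actually changes within such a run and the ``$\Max$ drops'' branch of your lexicographic argument never occurs; aside from this harmless redundancy and your alternative bookkeeping via a combined potential $K f_t+|S(X(t))|$ in place of the windowing claim, the two arguments coincide.
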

This Lemma, in turn,  immediately follows from the next statement and the observation that $0\le f(x)\le M^2N$, as well as the fact that $f(X_T)=0\Longleftrightarrow \Delta_T=0 \Longleftrightarrow X_T\in {\rm O}$ (see Claim~\ref{Claimf=d}).

\begin{claim}\label{ClaimB_t}
If $f_s>0$ then $f_{s+N-2}\le f_s-1$ on $A_s \cap A_{s+1} \cap\dots \cap A_{s+N-2}$.
\end{claim}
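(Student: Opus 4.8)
The plan is to combine the monotonicity of $f$ already guaranteed by Claim~\ref{ClaimfAt}(a) with a combinatorial argument tracking how the top level of the configuration is eroded under the prescribed moves. Throughout I work on the event $A_s\cap\dots\cap A_{s+N-3}$, which contains the event in the statement; on it every transition replaces one coordinate by $\lfloor(\text{average of its two neighbours})\rfloor$, so Claim~\ref{ClaimfAt}(a) gives $f_s\ge f_{s+1}\ge\dots\ge f_{s+N-2}$. Since the fitnesses lie in $\M$, $f$ is integer-valued, so $f_s>0$ means $f_s\ge1$. It therefore suffices to produce one index $t\in\{s,\dots,s+N-3\}$ with either $f_t=0$ (then the chain is absorbed and $f_{s+N-2}=0\le f_s-1$) or $f_{t+1}\le f_t-1$ (then $f_{s+N-2}\le f_{t+1}\le f_s-1$ by monotonicity).

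I would first record two structural facts about any non-constant configuration $x$ with $d(x)=1/2$; write $m=\Max(x)$ and $K=|S(x)|$. \emph{(i)} For $j\in S(x)$, integrality gives $2d_j(x)=(m-x_{j-1})+(m-x_{j+1})\le1$ with nonnegative integer summands, so exactly one neighbour of $j$ sits at level $m$, and, when $d_j(x)=\tfrac12$, the other neighbour sits at level $m-1$. In particular the level-$m$ nodes form circular arcs of length $\ge2$, so $K\ge2$; symmetrically a sub-maximal node cannot have both neighbours at level $m$ (that would force deviation $\ge1$), so the sub-maximal nodes also form arcs of length $\ge2$ and $K\le N-2$. \emph{(ii)} On $A_t$ with $\Delta_t=\tfrac12$, the tie-break may (and hence by definition does) select $\jmath_t\in S(X(t))$ with $d_{\jmath_t}(X(t))=\tfrac12$, as noted just before the statement; by \emph{(i)} its replacement value equals $\lfloor(2m-1)/2\rfloor=m-1$, so $\Max(X(t+1))=m$ (another level-$m$ node survives because $K\ge2$) and $|S(X(t+1))|=|S(X(t))|-1$.

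Now suppose, for contradiction, that $f_t>0$ and $\Delta_t<1$ for every $t\in\{s,\dots,s+N-3\}$. Integrality forces $\Delta_t\in\{\tfrac12,1,\tfrac32,\dots\}$, hence $\Delta_t=\tfrac12$ for all such $t$. Applying \emph{(ii)} at the steps $t=s,\dots,s+N-4$ yields $|S(X(s+N-3))|=|S(X(s))|-(N-3)$, while \emph{(i)} at times $s$ and $s+N-3$ gives $|S(X(s))|\le N-2$ and $|S(X(s+N-3))|\ge2$; together $2\le |S(X(s))|-(N-3)\le(N-2)-(N-3)=1$, a contradiction. Hence some $t\in\{s,\dots,s+N-3\}$ has $f_t=0$ or $\Delta_t\ge1$; in the latter case, on $A_t$ the replaced node $\jmath_t$ has $d_{\jmath_t}(X(t))=\Delta_t\ge1$, so Claim~\ref{ClaimfAt}(b) gives $f_{t+1}\le f_t-1$, and the first paragraph finishes the proof.

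The delicate step is \emph{(ii)}: one must check that when $\Delta_t=\tfrac12$ the tie-break can genuinely be resolved inside $S(X(t))$, and that such a peak has neighbour levels exactly $\{m,m-1\}$ — so that the move lowers precisely one level-$m$ node while keeping the top level intact — with the alternative neighbour pattern $\{m-1,m-1\}$ excluded exactly because $\Delta_t=\tfrac12$ rather than $\ge1$. Everything else is bookkeeping: the non-increase of $f$ and the strict unit drop at a node of deviation $\ge1$ are handed to us by Claim~\ref{ClaimfAt}.
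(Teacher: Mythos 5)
Your argument matches the paper's own proof essentially step for step: use Claim~\ref{ClaimfAt}(a) for monotonicity of $f$ along the path, rule out $\Delta_t=\tfrac12$ persisting for $N-2$ consecutive times by showing $|S(X(t))|$ lies in $\{2,\dots,N-2\}$ and strictly decreases at each such step, and then invoke Claim~\ref{ClaimfAt}(b) at the first step with $\Delta_t\ge1$. You spell out a bit more explicitly than the paper why the replacement value is exactly $m-1$ and why the top level persists (the paper compresses the bounds on $|S(X(t))|$ into a footnote and asserts the decrement ``by construction''), but the structure and the key lemma are the same.
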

\begin{proof}
Note that $\Delta_t$ can take only values $\{0,\frac12,1,\frac32,2,\dots\}$.
W.l.o.g.\ we assume that $s=0$. 

First, if $\Delta_t=0$ for some $0\le t\le N-2$, then $f_t=0$ by Claim~\ref{Claimf=d} and  by Claim~\ref{ClaimfAt}(a) and the fact that $f_0\ge 1$, we have $f_{N-2}\le 0=f_t\le f_0-1$. From now on suppose that $\min_{0\le t\le N-2}\Delta_t\ge 1/2$.

We will show that it is impossible to have $\Delta_t= \frac 12$ simultaneously for all $t=0,1,2,\dots,N-3$ (observe that the case  $\Delta_t=1/2$ contains, quite counter-intuitively, a very rich set of states, see Figure~\ref{refer01}). %This means that for some $t\in\{0,1,\dots,N-3\}$ we will have $\Delta_1\ge 1$.
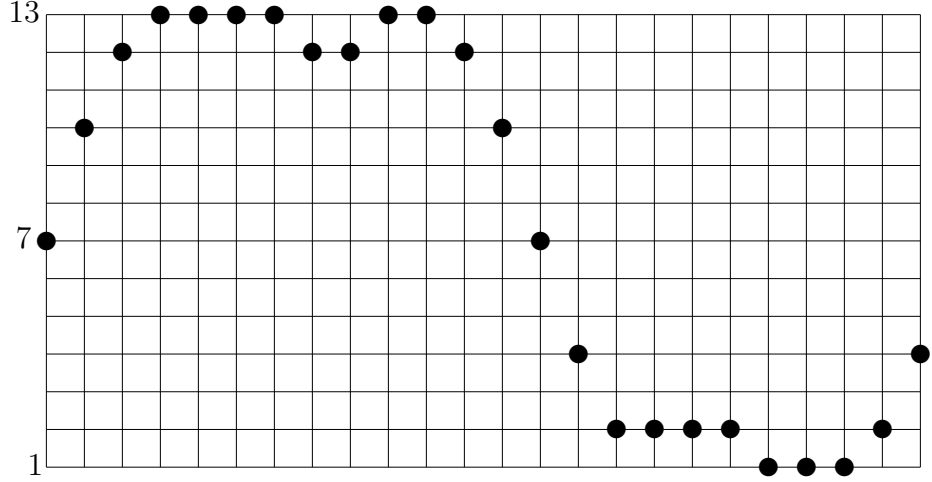
\begin{figure}[h]  \centering
\setlength{\unitlength}{5mm}
\begin{picture}(19,12)
\linethickness{0.01mm}
\multiput(0,0)(1,0){24}{\line(0,1){12}}
\multiput(0,0)(0,1){13}{\line(1,0){23}}
\put(0,6){\circle*{0.5}}
\put(1,9){\circle*{0.5}}
\put(2,11){\circle*{0.5}}
\put(3,12){\circle*{0.5}}
\put(4,12){\circle*{0.5}}
\put(5,12){\circle*{0.5}}
\put(6,12){\circle*{0.5}}
\put(7,11){\circle*{0.5}}
\put(8,11){\circle*{0.5}}
\put(9,12){\circle*{0.5}}
\put(10,12){\circle*{0.5}}
\put(11,11){\circle*{0.5}}
\put(12,9){\circle*{0.5}}
\put(13,6){\circle*{0.5}}
\put(14,3){\circle*{0.5}}
\put(15,1){\circle*{0.5}}
\put(16,1){\circle*{0.5}}
\put(17,1){\circle*{0.5}}
\put(18,1){\circle*{0.5}}
\put(19,0){\circle*{0.5}}
\put(20,0){\circle*{0.5}}
\put(21,0){\circle*{0.5}}
\put(22,1){\circle*{0.5}}
\put(23,3){\circle*{0.5}}
\put(-0.5,-0.2){$1$}
\put(-0.8,5.8){$7$}
\put(-1,11.8){$13$}
\end{picture}
\caption{\em \small A configuration with $\Delta_t = 1/2$ (note the periodic boundary conditions), $\M=\{1,2, \dots, 13\}$ and $N = 24$. Observe that if $\Delta_t=1/2$ then there will be  a number of ``plateaus'' each containing at least two maximal fitnesses; moreover, any two such plateaus will be separated by at least two non-maximal fitnesses.}\label{refer01}
\end{figure}
Indeed, the set $S(X(t))$ of indices of the maximum fitnesses must contain between $2$ and $N-2$ elements\footnote{a single maximum would imply $\Delta_t\ge 1$, the same holds if there are $N-1$ coinciding maxima; finally, $|S(X(t))|=N$ would imply that $\Delta_t=0$.}. However, on~$A_t$ we have $S(X(t+1))\subset S(X(t))$ and $|S(X(t+1))|=|S(X(t))|-1$ by construction. Since $S(X(0))\le N-2$, the value $\Delta_t$ cannot stay equal to $1/2$ for $N-2$ consecutive steps, and thus this case is impossible.

As a result, we conclude that $\Delta_t\ge 1$ for some $t\in\{0,1,\dots, N-3\}$. Then $f_{t+1}\le f_t-1$  by Claim~\ref{ClaimfAt}(b). As a result, $f_{N-2}\le f_{t+1}\le f_t-1\le f_0$ by Claim~\ref{ClaimfAt}(a).
\end{proof}

\begin{rema}
We have actually shown that $T$ in Lemma~\ref{lem-dc} can be chosen no larger than $M^2N\times (N-2)$, i.e.\ $\P(X(M^2N(N-2))\in{\rm O}\| X(0)=x)>0$ for any $x\in\M^N$.
\end{rema}

\begin{rema}
It would be interesting  to find the distribution of the limiting absorbing configuration, i.e.\ $\xi:=\lim_{t\to\infty} X_i(t)$; clearly it will depend on $X(0)$. This  is quite hard problem, and we can present only results based on simulations. Figure~\ref{figsim} shows the histograms of the distribution of $\xi$ for different values of $M$ and $N$, starting from a random initial condition, i.e.\ $X_i(0)$ are i.i.d.\ random variable uniformly distributed on $\M$.
\end{rema}

\begin{figure}[h]
\centering
\includegraphics[scale=0.3]{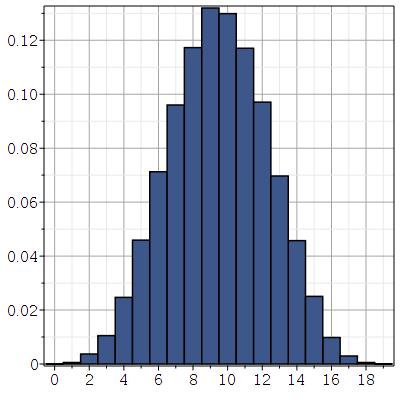}
\includegraphics[scale=0.3]{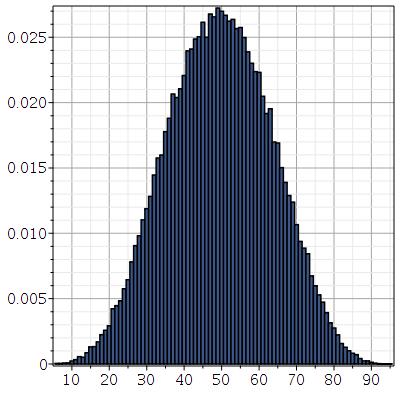}
\includegraphics[scale=0.3]{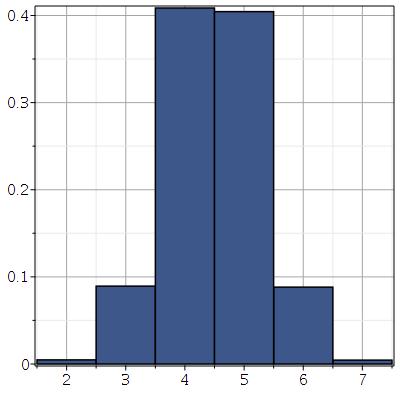}
\caption{Distribution of $\xi$ based on simulations, for $(N,M)=(20,20)$, $(20,100)$, and $(200,10)$ respectively. Uniform random initial conditions.}\label{figsim}
\end{figure}

\section{Continuous case}\label{Sec2}
Throughout this section, we assume that $\zeta\sim U[0,1]$, and $X_i(t)\in [0,1]$ for all $i\in S$ and $t=0,1,2,\dots$. We also assume that $X(0)$ is such that $\jmath(X(0))$ is non-random.

\begin{thm}\label{thm_main}
There exists a.s.\ a random variable  $\bar X\in [0,1]$ such that as $t\to\infty$
$$
(X_1(t),X_2(t),\dots,X_{\jmath(X(t))-1}(t),X_{\jmath(X(t))+1}(t),
\dots,X_N(t))\to (\bar X,\bar X,\dots,\bar X)\in [0,1]^{N-1}\qquad\text{a.s.}
$$
\end{thm}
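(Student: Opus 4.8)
The overall strategy is to find a bounded submartingale (or supermartingale) whose convergence forces the configuration to flatten out, then bootstrap from "the spread goes to zero" to "the non-updated coordinates converge to a common limit." The natural candidate is the discrete Dirichlet-type energy $f(X(t))=\sum_{i=1}^N (X_i(t)-X_{i+1}(t))^2$ used in Section \ref{Sec1}; unlike the discrete case, we can no longer round to $\lfloor (x_{i-1}+x_{i+1})/2\rfloor$, but the exact-midpoint computation in the proof of Claim \ref{ClaimfAt} shows that replacing $x_i$ by \emph{any} value $y$ gives $f(x')-f(x) = 2\big(d_i(x')^2 - d_i(x)^2\big)$ where $d_i(x')=|y-\tfrac{x_{i-1}+x_{i+1}}2|$. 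Conditionally on $X(t)=x$ with $\jmath=\jmath(x)$ deterministic, the new coordinate is $\zeta\sim U[0,1]$ independent of everything, so
$$
\E\big[f(X(t+1))-f(X(t))\mid X(t)=x\big] = 2\,\E\Big[\big(\zeta - \tfrac{x_{\jmath-1}+x_{\jmath+1}}2\big)^2\Big] - 2\, d_{\jmath}(x)^2 .
$$
First I would show this drift is $\le$ some universal constant and becomes negative once $d(x)=d_\jmath(x)$ is large; more usefully, I would track $f$ together with the indicator of "large deviation" to extract that $d(X(t))\to 0$ a.s. Concretely: $f(X(t))$ is bounded in $[0,N]$, and the positive part of its drift is controlled while the negative part is at least $c\cdot d_\jmath(x)^2$ minus a bounded term, so a Robbins–Siegmund / supermartingale-convergence argument yields $\sum_t d_{\jmath_t}(X(t))^2<\infty$ a.s. on the relevant events, hence $d(X(t))\to 0$ a.s.

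The second, and I expect harder, step is to upgrade $d(X(t))\to 0$ to genuine convergence of the whole vector (minus the active coordinate) to a single random value $\bar X$. Note $d(X(t))\to 0$ only says the second differences vanish, i.e. the configuration becomes "discretely affine" — but on a \emph{cycle} an affine function must be constant, so heuristically the gaps $X_i-X_{i+1}$ all shrink. I would make this quantitative: bound $\max_i|X_i(t)-X_{i+1}(t)|$ or $f(X(t))$ itself in terms of $d(X(t))$ using the cycle structure (summing the telescoping relation $x_i - x_{i+1}$ around the loop and using that the "slopes" can differ consecutively only by $2d_i$), to conclude $f(X(t))\to 0$, hence $\max_{i,j}|X_i(t)-X_j(t)|\to 0$. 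Then I would show the common value converges: let $m(t)=\min_i X_i(t)$, $M(t)=\max_i X_i(t)$. One checks that $M(t)$ is "almost" a supermartingale and $m(t)$ "almost" a submartingale up to errors controlled by $M(t)-m(t)\to 0$ and by the event that the updated coordinate lands outside $[m(t),M(t)]$; since $M(t)-m(t)\to0$, eventually any newly drawn $\zeta$-value that is accepted must be close to the cluster (or else $d$ would be bounded away from $0$), giving that $M(t)$ and $m(t)$ are Cauchy a.s. with a common limit $\bar X$.

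The main obstacle I anticipate is exactly this last gluing step: near the absorbing set the process does \emph{not} freeze (unlike the discrete case), since a fresh $U[0,1]$ sample is almost surely far from a tight cluster, which would momentarily make $d$ of order $1$. So one cannot argue "once close, stays close" naively. The resolution should be that such a far-away sample is immediately the unique argmax and gets removed within a bounded (in distribution) number of steps, during which the \emph{other} $N-1$ coordinates are untouched — so the cluster of the remaining coordinates only tightens, and the excursions of the active coordinate, though large, do not disturb convergence of the rest. Formalizing "the active coordinate is almost always the one that just got an anomalous value, and it is expelled quickly" — i.e. controlling the return times and showing the bad coordinate is essentially decoupled from the converging cluster — is where the real work lies; I would handle it by a renewal/excursion decomposition of the time axis into stretches where a single "outlier" site is being repeatedly resampled, plus an a.s. lower bound on the probability that a resample lands inside the cluster, forcing only finitely many "escapes" of the cluster diameter above any fixed $\eps$.
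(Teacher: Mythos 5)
Your overall outline (find a Lyapunov function forcing the spread to zero, then upgrade to convergence of the non-updated coordinates) matches the paper's strategy, and you correctly identify the crucial obstacle: near a tight cluster a fresh $U[0,1]$ sample is typically far away, so $d(X(t))$ does \emph{not} go to zero along the raw time scale, and the process does not freeze as in the discrete case. However, the way you propose to overcome that obstacle does not work, and the gap is exactly where the paper's real technical content lies.

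First, your candidate Lyapunov function $f(x)=\sum_i(x_i-x_{i+1})^2$ is \emph{not} a supermartingale for the original process. Your own drift formula gives
$$
\E\big[f(X(t+1))-f(X(t))\mid X(t)=x\big]
=2\,\E\Big[\big(\zeta-\tfrac{x_{\jmath-1}+x_{\jmath+1}}2\big)^2\Big]-2\,d_\jmath(x)^2
=2\Big[\tfrac1{12}+\big(\tfrac12-\mu_\jmath\big)^2\Big]-2\,d_\jmath(x)^2,
$$
where $\mu_\jmath=\frac{x_{\jmath-1}+x_{\jmath+1}}2$. Once the configuration is a tight cluster (small $d$), this drift is bounded \emph{below} by a constant of order $1/6$; it is positive and not small. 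Consequently a Robbins--Siegmund argument does not apply (it requires the positive part of the drift to be summable in $t$), and $\sum_t d_{\jmath_t}(X(t))^2<\infty$ does not follow. You gesture at the fix ("such a far-away sample gets removed within a bounded number of steps"), but do not turn it into a Lyapunov-function statement. The paper makes this precise by passing to the \emph{embedded} process $\tilde X(s)=X(\nu_s)$, where $\nu_{k+1}$ is the first time after $\nu_k$ at which either $d$ strictly decreases or the location of the argmax changes. Crucially, between these times only the outlier coordinate is resampled, so the other $N-1$ coordinates are literally frozen, and the resample distribution restricted to the acceptance window is still uniform; this is exactly the "excursion decomposition" you propose, made rigorous.

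Second, and more subtly, even for the embedded process, $f$ alone is still the wrong functional. When $a<\mu-\delta$ (which happens exactly when, for some $u<\mu-\delta$, the argmax would jump to a neighbour rather than $d_3$ decreasing), the conditional mean of $d_3'^2$ over the acceptance window $[a,x_3]$ can exceed $\delta^2$, so $\E[f(\tilde X(s+1))-f(\tilde X(s))\mid\tF_s]$ can be positive. The paper's Lyapunov function $h(x)=2\sum(x_i-x_{i+1})^2+\sum(x_i-x_{i+2})^2$ adds a second-neighbour term precisely to absorb the contribution from these "argmax jumps to a neighbour" transitions; even then, Lemma~\ref{lemsuperm} requires a case analysis (a small linear program) to prove the supermartingale property. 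Your plan does not contain this idea, and the supermartingale property is the load-bearing step.

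Finally, your proposed second phase (showing $M(t)$ and $m(t)$ are almost super/submartingales) is a genuinely different route from the paper's, which instead proves \emph{exponential} decay of $h(\tilde X(s))$ (via a renewal/stopping-time argument, Proposition~\ref{prop_mart}) and then uses Corollary~\ref{corr3d} ($|\tilde X_i(s+1)-\tilde X_i(s)|\le 4d(\tilde X(s))$) to get a geometrically summable telescoping bound, hence Cauchy convergence of each coordinate along the embedded sequence. The $m/M$ route would still have to control the "far-away resample" excursions on the raw time scale, so even there you would end up reinventing the embedded process. In short: your high-level plan is reasonable, but the two concrete choices it rests on -- the Lyapunov function $f$ and working on the raw time scale -- both fail, and the fix (embedded process plus the second-neighbour-augmented $h$, proved by explicit case analysis) is the heart of the proof.
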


The proof of this theorem will consists of two parts.
Firstly (see Lemma~\ref{lem_xi->0}), we will show that the properly defined ``spread'' between the values  $X_1(t),\dots,X_N(t)$ converges to zero.
This does not, however, imply the the desired result, as hypothetically we can have the situation best described by the ``Dance of the Little Swans'' from Tchaikovsky's ``Swan Lake'': while the mutual distances between the $X_i$'s decrease or even some stay $0$, their common location changes with time, and thus does not converge to a single point in $[0,1]$. This can happen, for example, if the diameter of the configuration converges to zero too slowly.

The second part of the proof will show that not only the distances between the $X_i$'s decrease, but they all (but the most recently changed one) converge to the same random limit. Please note that the similar strategy was used in~\cite{GVW}, however, in our case both steps require much more work.

%A weaker statement, which will follow from Lemma~\ref{lem_xi->0}, is
%\begin{prop} As $t\to\infty$, $\left|X_i(t)-X_{i+1}(t)\right|\cdot 1_{\jmath(X(t))\not\in\{i,i+1\}}\to 0$ a.s. \end{prop}

It turns out that it is much easier to work with the embedded process, for which either the non-conformity of the node at which the value is replaced, is smaller than the initial non-conformity, or at least the location of the ``worst" node (i.e.\ the one where $d_i$ is the largest) has changed, whichever comes first. Formally, let
$\nu_0=0$ and recursively define for $k=0,1,2,\dots$
$$
\nu_{k+1}=\inf\left\{t>\nu_{k}:\ \jmath(X(t))\ne \jmath(X(\nu_{k}))\text{ or }d(X(t))<d(X(\nu_k))\right\}.
$$
Note that due to the continuity of $\zeta$ each $\jmath(X(t))$ is uniquely defined a.s., and that all $\nu_k$ are finite a.s..

\noindent
{\bf Examples:}
\begin{itemize}
\item[(a)]
$x=(\dots 0.5,\underline{0.6},0.5,0.3,\dots)$. The ``worst" node is the second one (with the fitness of $0.6$) and $d=d_2(x)=0.1$; it is replaced, say, by $0.32$. Now the configuration becomes  
$$
x'=(\dots, 0.5,0.32,\underline{0.5},0.3,\dots)$$ 
and the worst node is the third one with $d(x')=d_3(x')=0.19>0.1=d(x)$; 

\item[(b)] $x$ is the same as in (a), but $x_2$ is replaced by $0.58$. Now the configuration becomes  $$x=(\dots,0.5,\underline{0.58},0.5,0.3,\dots)$$ and the worst node is still the second one with $d(x')=d_2(x')=0.08<0.1=d(x)$.
\end{itemize}

Now let $\tilde X(s)=X(\nu_s)$  and $\tF_s=\sigma\left(\tilde X(1),\dots,\tilde X(s)\right)$ be the filtrations associated with this embedded process. Since throughout time $[\nu_k,\nu_{k+1})$ the value $\jmath$ remains constant at $\jmath_{\nu_k}$ and only~$X_{\jmath_{\nu_k}}$ is updated, we have
$$
X_i(t)=X_i(\nu_k)\quad\text{for all } i\ne \jmath(X(t))
$$
for $t\in [\nu_k,\nu_{k+1})$. Moreover, the process $\tilde X$ evolves as a Markov process but with the ``update'' distribution restricted from the full range, since a uniform distribution conditioned to be in some subinterval is still uniform (this will be used later in Lemma~\ref{lemsuperm}).
Hence Theorem~\ref{thm_main}  follows immediately from
\begin{thm}\label{thm_main_alt}
There exists a.s.\ a random variable $\bar X\in [0,1]$ such that as $s\to\infty$
$$
(\tilde X_1(s),\tilde X_2(s),\dots,\tilde X_N(s))\to (\bar X,\bar X,\dots,\bar X)\in [0,1]^{N}\qquad\text{a.s.}
$$
(Moreover, this convergence happens exponentially fast: there is an $s_0=s_0(\omega)<\infty$ and a non-random $\gamma\in(0,1)$   such that 
$
\left| \tilde X_i(s)-\bar X\right|\le \g^s
$
for all  $i\in S$ and $s\ge s_0$.)
\end{thm}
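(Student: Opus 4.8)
The plan is to prove Theorem~\ref{thm_main_alt} via a potential/Lyapunov argument on the embedded process $\tilde X(s)$, establishing the exponential rate first and deducing a.s.\ convergence from summability. Introduce the range $R(x)=\max_i x_i-\min_i x_i$ and the quadratic form $g(x)=\sum_{i=1}^N(x_i-x_{i+1})^2$ (cyclic indices) already used in Section~\ref{Sec1}; note $g$ and $R$ are comparable, $g(x)\le R(x)^2\cdot N$ and $R(x)^2\le N\,g(x)$ or similar, so controlling one controls the other. The first step is a one-step contraction estimate for the embedded chain: I would show that there is a non-random $\rho<1$ such that
$$
\E\!\left[g(\tilde X(s+1))\,\big|\,\tF_s\right]\le \rho\, g(\tilde X(s)).
$$
To see why such an estimate should hold, observe that at an embedded step we are at a configuration $x$ with worst node $j=\jmath(x)$ and $d_j(x)=d(x)=:\Delta$, and we replace $x_j$ by a fresh $U[0,1]$ value conditioned to lie in the (nonempty) subinterval of values that make the step ``productive'' in the sense of the definition of $\nu_{k+1}$ — i.e.\ either $d_j$ strictly drops below $\Delta$, or $j$ ceases to be the argmax. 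By the exact algebraic identity in Claim~\ref{ClaimfAt}, replacing $x_j$ by any value $a$ changes $g$ by $2(d_j(x')^2-d_j(x)^2)$ where $d_j(x')=|a-\frac{x_{j-1}+x_{j+1}}2|$; so the change in $g$ is controlled by how much $d_j$ shrinks. The key geometric fact I would establish is: a uniform fraction (bounded below independently of the configuration) of the conditioning interval lies close to the midpoint $m=\frac{x_{j-1}+x_{j+1}}2$, forcing $\E[d_j(x')^2\mid \tF_s]\le (1-c)\Delta^2$ for some constant $c>0$; combined with $d_j(x)^2=\Delta^2\ge \tfrac1N g(x)$ (the worst coordinate carries a definite fraction of the total) this yields the stated multiplicative drop.

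Granting the contraction, Theorem~\ref{thm_main_alt} follows by a standard argument. From $\E[g(\tilde X(s))]\le \rho^s g(\tilde X(0))$, Markov's inequality and Borel--Cantelli give $g(\tilde X(s))\le \gamma^{2s}$ eventually a.s.\ for any fixed $\gamma\in(\sqrt\rho,1)$; hence $R(\tilde X(s))\le C\gamma^s$ eventually. This already gives the ``distances go to zero exponentially'' half. For genuine convergence to a single point, note that once $R(\tilde X(s))$ is exponentially small, each subsequent embedded update moves only one coordinate and by the definition of an embedded step it either reduces $d(\tilde X)$ or is the last step before the argmax moves; in either case the new value lands within $O(R)$ of the old configuration, so $\min_i \tilde X_i(s)$ and $\max_i\tilde X_i(s)$ are each Cauchy sequences (their increments are bounded by $C\gamma^s$, which is summable). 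Therefore $\max_i\tilde X_i(s)$ and $\min_i\tilde X_i(s)$ both converge to a common limit $\bar X\in[0,1]$, and $|\tilde X_i(s)-\bar X|\le R(\tilde X(s))+\sum_{r\ge s}C\gamma^r\le \gamma^s$ for $s\ge s_0$ after adjusting $\gamma$ slightly, which is the claimed bound.

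The main obstacle is the contraction estimate, specifically the uniform lower bound $c>0$ on the expected relative decrease of $d_j^2$ — this is where the structure of the model genuinely enters, and where one must be careful about two things. First, the conditioning interval defining the embedded step can be short and need not be symmetric about the midpoint $m$, so one has to check that it always contains a sub-interval around $m$ (or around a point strictly inside $[m-\Delta,m+\Delta]$) of length bounded below by a fixed multiple of $\Delta$; this requires using that $m$ itself lies in $[0,1]$ up to an error controlled by $R\le 1$, and a short case analysis according to whether $x_j>m$ or $x_j<m$ and whether the ``argmax changes'' branch or the ``$d$ decreases'' branch is the binding one. Second, one needs $\Delta^2\ge c' g(x)$, i.e.\ the worst node really dominates the potential; this is elementary ($g(x)\le \sum_i (2d_i(x))^2\cdot(\text{const})$ after expressing $x_i-x_{i+1}$ as a telescoping combination, or more directly $g(x)\le 4N^2\Delta^2$) but must be stated cleanly. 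Everything else — the algebraic identity, the comparison of $g$ with $R^2$, the Borel--Cantelli step, and the Cauchy argument for the common location — is routine.
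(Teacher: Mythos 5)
Your central step fails: the quadratic form $g(x)=\sum_i(x_i-x_{i+1})^2$ that you borrow from Section~\ref{Sec1} is \emph{not} a supermartingale for the embedded process $\tilde X(s)$, and the one-step expected contraction $\E[g(\tilde X(s+1))\mid\tF_s]\le\rho\,g(\tilde X(s))$ you assert fails already with $\rho=1$. The exact identity from Claim~\ref{ClaimfAt} gives, as you note, $g(x')-g(x)=2\bigl(|u-m|^2-\delta^2\bigr)$ with $m=\tfrac12(x_{j-1}+x_{j+1})$ and $\delta=d_j(x)$, so what is needed is $\E[(u-m)^2]\le\delta^2$ when $u$ is uniform on the acceptance interval $[a,b]$. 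With $b=x_j=m+\delta$ and $a=\max\{0,\min_k Q_k\}$ (the paper's notation), one has $\min_k Q_k\ge m-3\delta$ (for instance $Q_1=x_1-x_2+x_4\ge m-3\delta$, with equality when $d_2(x)=\delta$ is tight), so $a$ can lie anywhere in $[\,\max\{0,m-3\delta\},\,m-\delta\,]$. Writing $a=m-c\delta$ with $c\in[1,3]$ and $u\sim U[m-c\delta,\,m+\delta]$,
$$
\E\bigl[(u-m)^2\bigr]=\frac{(c+1)^2\delta^2}{12}+\frac{(1-c)^2\delta^2}{4}=\frac{(c^2-c+1)\,\delta^2}{3},
$$
which \emph{exceeds} $\delta^2$ whenever $c>2$, and $c$ as large as $3$ is attainable for suitable configurations with $m>3\delta$ (one needs $N$ large enough to close the circle while keeping all other $d_i\le\delta$; the second differences $\Delta_{i+1}-\Delta_i$ just have to stay within $2\delta$, which is easy for large $N$). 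In that regime $\E[g(\tilde X(s+1))\mid\tF_s]>g(\tilde X(s))$, so the claimed drift bound is false. Your intuition that ``a uniform fraction of the conditioning interval lies close to $m$'' does not help, because the rest of the interval, which can extend to $m-3\delta$, dominates the second moment. This is precisely why the paper introduces the augmented Lyapunov function $h(x)=2\sum_i(x_i-x_{i+1})^2+\sum_i(x_i-x_{i+2})^2$: the next-nearest-neighbour terms generate the correction $(2x_3-a-b)(x_1+2x_2+2x_4+x_5)-6x_3^2$ in the drift \eqref{eqDrift}, and the inequality $\Delta\le 0$ is verified only after the nontrivial linear-programming case analysis in Lemma~\ref{lemsuperm}.

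A secondary gap is the claimed route to the exponential rate. Even for the correct $h$, the paper does not prove a one-step expected contraction by a factor $\rho<1$; it proves only the supermartingale inequality $\E[\xi(s+1)\mid\tF_s]\le\xi(s)$, and then obtains the exponential rate by combining a bounded multiplicative up-jump ($\xi(s+1)\le r\,\xi(s)$, Lemma~\ref{lem_up_jump}), a definite down-jump with probability bounded below (Corollary~\ref{corr_xi_decr}, Lemma~\ref{lem_finite_mean}), and the optional-stopping/renewal argument of Proposition~\ref{prop_mart}. Your Borel--Cantelli step would be valid if the contraction held, and your concluding Cauchy argument for a common limit (using that each embedded update moves one coordinate by $O(d(\tilde X(s)))$, i.e.\ Corollary~\ref{corr3d}) matches the paper; but without a correct Lyapunov function and a correct derivation of the exponential rate, the proof does not close.
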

\begin{rema}
In what follows, we assume that $N\ge 5$. The cases $N=3$ and $N=4$ can be studied somewhat easier, and we leave this as an exercise.
\end{rema}

We will use the Lyapunov functions method, with a clever choice of the function.
For $x=(x_1,x_2,\dots,x_N)$ define
\begin{align*}
h(x)&=2\cdot \sum_{i\in S} (x_i-x_{i+1})^2
+ \sum_{i\in S} (x_i-x_{i+2})^2
=2 \sum_{i\in S} \left(3x_i^2-2 x_i x_{i+1} - x_i x_{i+2}\right).
\end{align*}

We start by showing that $h(\tilde X(s))$ is a non-negative supermartingale (Lemma~\ref{lemsuperm}), hence it must converge a.s. Then we show that this limit is actually $0$ (Lemma~\ref{lem_xi->0}). Combined with the fact that $h(\tilde X(s))$, as a metric, is equivalent to $\max_{i,j} |\tilde X_i(t)-\tilde X_j(t)|$, (see Lemma~\ref{lembounds}) this ensures that eventually all $\tilde X_i$ become very close to each other, thus establishing the first necessary ingredient of the proof of the main theorem.

\begin{lemma}\label{lemsuperm}
$\xi(s)=h\left(\tilde X(s)\right)$ is a non-negative supermartingale.
\end{lemma}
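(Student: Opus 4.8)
The plan is to fix a step $s$, condition on $\tilde X(s)=x$, and compute $\E[h(\tilde X(s+1))\mid \tF_s]$ directly. Let $j=\jmath(x)$ be the (a.s.\ uniquely defined) worst node, so that only coordinate $x_j$ gets updated during $[\nu_s,\nu_{s+1})$, and the value eventually recorded as $\tilde X_j(s+1)$ is a uniform random variable on some subinterval $I\subseteq[0,1]$; call its (conditional) mean $m$ and note that by the stopping rule defining $\nu_{s+1}$ this subinterval $I$ is exactly the set of values $y$ for which replacing $x_j$ by $y$ keeps $j$ the strict argmax and does not decrease $d$, i.e.\ $I=\{y:\ d_j(x_{j-1},y,x_{j+1})\ge d(x)\text{ and }j\text{ still the worst node}\}$. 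Only the three terms of $h$ that involve $x_j$ change, namely those coupling node $j$ to nodes $j\pm1$ and $j\pm2$; writing $h$ as a quadratic in the $j$-th coordinate, $h(x)=6x_j^2-(\text{linear in }x_j)+(\text{const})$ where the linear coefficient is built from $x_{j\pm1},x_{j\pm2}$, one finds that $h(\dots,y,\dots)$ is a convex parabola in $y$ minimized at $y^\star=\tfrac{2(x_{j-1}+x_{j+1})+(x_{j-2}+x_{j+2})}{6}$.

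The first key step is therefore the elementary identity
\begin{align*}
\E\big[h(\tilde X(s+1))\mid \tF_s\big]-h(x)
&=6\big(\E[(\tilde X_j(s+1))^2\mid\tF_s]-x_j^2\big)-\text{(linear coeff.)}\cdot\big(m-x_j\big)\\
&=6\big(\operatorname{Var}(\tilde X_j(s+1)\mid\tF_s)+(m-y^\star)^2-(x_j-y^\star)^2\big),
\end{align*}
obtained by completing the square. Since $\operatorname{Var}$ of a uniform on an interval of length $\le 1$ is at most $1/12$, the increment is $\le 6\big(\tfrac1{12}+(m-y^\star)^2-(x_j-y^\star)^2\big)=\tfrac12+6(m-y^\star)^2-6(x_j-y^\star)^2$. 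So the whole problem reduces to showing that the removed node $x_j$ is far enough from the parabola's vertex $y^\star$ — quantitatively, $6(x_j-y^\star)^2-6(m-y^\star)^2\ge \tfrac12$ — which is where the hypothesis that $j$ is the node of \emph{maximal} non-conformity, together with $N\ge 5$, must enter.

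The second, and genuinely harder, step is establishing that lower bound on $(x_j-y^\star)^2$. The intuition is that if $x_j$ deviates from $\tfrac{x_{j-1}+x_{j+1}}2$ by the maximum amount $d(x)$ among all nodes, then in particular the neighbours $j\pm1$ deviate by \emph{less}, which forces $x_{j\pm2}$ to be comparable to $x_{j\pm1}$, hence $y^\star$ (a convex combination that weights $x_{j\pm1}$ heavily) stays close to $\tfrac{x_{j-1}+x_{j+1}}2$, so $x_j$ is still roughly $d(x)$ away from $y^\star$; meanwhile $m$, being the mean of a subinterval $I$ all of whose points keep $d_j\ge d(x)$, lies on the far side of $\tfrac{x_{j-1}+x_{j+1}}2$ from $x_j$ or at least no closer to $y^\star$ than a controlled amount. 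Making this precise will require casework on the sign of $x_j-\tfrac{x_{j-1}+x_{j+1}}2$ and careful bookkeeping of the constraints $d_{j\pm1}(x)\le d_j(x)$; I expect this to be the main obstacle, and it is presumably why the authors restrict to $N\ge5$ (so that nodes $j-2,j-1,j,j+1,j+2$ are five distinct vertices and the quadratic $h$ genuinely decouples in the way described) and defer $N=3,4$ to the reader. Once the pointwise inequality $\E[h(\tilde X(s+1))\mid\tF_s]\le h(x)$ is in hand, non-negativity of $h$ is immediate from its definition as a sum of squares, completing the proof.
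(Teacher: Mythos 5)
Your completing-the-square reformulation is sound in spirit and, if carried through carefully, actually reduces to the same inequality the paper proves by linear programming: writing $\Delta=6\big[\operatorname{Var}+(m-y^\star)^2-(x_j-y^\star)^2\big]$ with $m=\frac{a+b}{2}$, $\operatorname{Var}=\frac{(b-a)^2}{12}$, and $b=x_j$ (since any $u>x_j$ is rejected), the condition $\Delta\le0$ simplifies algebraically to $3y^\star-2x_j\le a$, which after expanding $y^\star=\frac{2(x_{j-1}+x_{j+1})+(x_{j-2}+x_{j+2})}{6}$ is precisely the paper's inequality~\eqref{eqner}, $x_1+2x_2-4x_3+2x_4+x_5\le 2a$. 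So your decomposition is a genuinely nicer way to organize the computation of~\eqref{eqDrift}, and the casework/LP step is the same in either approach.

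However, as written the proposal has two errors that break it. First, you have the interval $I$ backwards: $\tilde X_j(s+1)$ is recorded at the stopping time $\nu_{s+1}$, which fires when $\jmath$ \emph{changes} or $d$ \emph{decreases}. Hence $I$ is the set of values $y$ for which $j$ \emph{ceases} to be the argmax or $d_j$ \emph{drops below} $d(x)$ --- exactly the complement of what you wrote --- and this also invalidates your heuristic that $m$ lies ``on the far side of $\frac{x_{j-1}+x_{j+1}}{2}$ from $x_j$'' because $I$ keeps $d_j\ge d(x)$; it does not. Second, the bound $\operatorname{Var}\le\frac1{12}$ is far too crude: the interval $I=[a,b]$ has length on the order of $d(x)$, which shrinks to zero along the process, so the correct variance is $\frac{(b-a)^2}{12}$. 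The reduction you propose --- showing $6(x_j-y^\star)^2-6(m-y^\star)^2\ge\frac12$ --- is therefore false for any nearly flat configuration (the left-hand side is $O(d(x)^2)\to0$), so this step would fail outright. Replace $\frac1{12}$ by $\frac{(b-a)^2}{12}$, keep $b=x_j$, and the algebra collapses to the same LP verification the paper performs; without that correction the plan cannot be completed.
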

\begin{proof}
The non-negativity of $\xi(s)$ is obvious. To show that it is a supermartingale, assume that $\tilde X(s)=(x_1,x_2,x_3,x_4,x_5,\dots)$ and w.l.o.g.\ that $\jmath(\tilde X(s))=3$. Suppose that the allowed range (i.e., for which either $d$ decreases or the location of the minimum changes) for the newly sampled point is $[a,b]\subseteq [0,1]$. Assuming the newly sampled point is uniformly distributed on $[a,b]$ (since a restriction of the uniform distribution to a subinterval is also uniform), we get
\begin{align}\label{eqDrift}
\Delta&:=\E (\xi(s+1)-\xi(s) |\tF_s)=
\int_a^b \left\{
2(x_2-u)^2+2(u-x_4)^2+(x_1-u)^2+(u-x_5)^2\right.
 \nonumber \\
&\qquad \left. -\left[2(x_2-x_3)^2+2(x_3-x_4)^2+(x_1-x_3)^2+(x_3-x_5)^2\right]\right\}\frac{\rm{d}u}{b-a}
\\ \nonumber
&=2(a^2+b^2+ab)+(2x_3-a-b)(x_1+2x_2+2x_4+x_5)
-6x_3^2.
\end{align}
Now we need to compute the appropriate $a$ and $b$, and then show that $\Delta\le 0$.

W.l.o.g.\ we can assume that $x_3>\frac{x_2+x_4}{2}$, the  case $x_3<\frac{x_2+x_4}{2}$ is equivalent to $(1-x_3)>\frac{(1-x_2)+(1-x_4)}{2}$. Now setting $\tilde x_i=1-x_i$ for all $i$ yields identical calculations.

Suppose that the fitness at node $3$ is replaced by some value $X(\nu_s+1)=:u$, let the new value of the non-conformity at node $3$ be  $d_3'=d_3(x_1,x_2,u,x_4,x_5,\dots)=d_3(X(\nu_s+1))$.

\begin{itemize}
\item
If $x_3$ is replaced by $u>x_3$, then this value will be ``rejected", in the sense that $d$ has only increased while the $\arg\max_{i\in S} d_i$ is still at the same node (i.e.,  $3$). Indeed, when $x_3$ increases by some~$\delta>0$, so does~$d_3$, while $d_2$ and $d_4$ can potentially increase only by $\delta/2$ and thus cannot overtake $d_3$.

\item
When $u\in \left(\frac{x_2+x_4}{2},x_3\right)$,  $d_3'$ is definitely smaller than the original $d_3$.

Assume from now on that $u\in \left(0,\frac{x_2+x_4}{2}\right)$. When $x_3$ is replaced by $u$, it might happen that while the new $d_3$ is larger than the original one, the value of $d_2$ or $d_4$ overtakes $d_3$.

\item
When $u\in \left(0,\frac{x_2+x_4}{2}\right)$ the condition that $d_3'<d_3$ is equivalent to
$$
\frac{x_2+x_4}{2}-u<x_3-\frac{x_2+x_4}{2}
\Longleftrightarrow u>x_2+x_4-x_3=:Q_0.
$$

\item
For $d_2$ to overtake $d_3$, we need
\begin{align*}
\left|x_2 -\frac{x_1+u}{2} \right| >\frac{x_2+x_4}{2}-u
\quad \Longleftrightarrow\quad
\begin{cases}
&u>x_1-x_2+x_4=:Q_1\\
& \text{or}\\
&u>\frac{-x_1+3x_2+x_4}{3}=:Q_2
\end{cases}
\end{align*} 

\item
For $d_4$ to overtake $d_3$, we need
\begin{align*}
 \left|x_4 -\frac{u+x_5}{2} \right| >\frac{x_2+x_4}{2}-u
\quad \Longleftrightarrow\quad
\begin{cases}
&u>x_2-x_4+x_5=:Q_3\\
&\text{or}\\
&u>\frac{x_2+3x_4-x_5}{3}=:Q_4
\end{cases}
\end{align*}
As a result, the condition for $d_3$ to be overtaken by some other node, or $d_3'<d_3$ is
\begin{align*}
u&>\min_{j=0,1,2,3,4} Q_j.
\end{align*}
\end{itemize}
Consequently, we must set
\begin{align*}
a&=\max\left\{0, \min \{Q_0,Q_1,Q_2,Q_3,Q_4\}\right\}
\\
&=\max\left\{0, \min\left\{
x_2+x_4-x_3,  x_1-x_2+x_4,  \frac{-x_1+3x_2+x_4}{3},
x_2-x_4+x_5,  \frac{x_2+3x_4-x_5}{3}
\right\}\right\},
\\ b&=x_3.
\end{align*}
Note that we are guaranteed that $ a\le b$. This is trivial when $a=0$; on the other hand, when $a>0$ we have
$$
a\le x_2+x_4-x_3=
\frac{x_2+x_4}{2}-\left[x_3-\frac{x_2+x_4}{2}\right]
<\frac{x_2+x_4}{2}<x_3=b
$$
since $x_3>\frac{x_2+x_4}{2}$.

By substituting $b=x_3$ into the expression for the drift~\eqref{eqDrift}, we get
$$
\Delta=(x_3-a)(x_1+2x_2-4x_3+2x_4+x_5-2a)
$$
and to establish $\Delta\le 0$ it suffices to show
\begin{align}\label{eqner}
x_1+2x_2-4x_3+2x_4+x_5\le 2 a
=2\max\{0,\min\{Q_0,Q_1,Q_2,Q_3,Q_4\}\}
\end{align}
under the assumption that
$$
x_3-\frac{x_2+x_4}{2}>\max\left\{
\left|x_2-\frac{x_1+x_3}{2}\right|,
\left|x_4-\frac{x_3+x_5}{2}\right|
\right\}
$$
that is, equivalently, 
\begin{align}\label{eqx3}
x_3>\max\{Q_1,Q_2,Q_3,Q_4\}.
\end{align}
In order to show~\eqref{eqner} we consider a number of cases.
First, assume that
$x_2+x_4<x_3$. Then $Q_0<0$ and $a=0$. From~\eqref{eqx3} we get that $2 x_3>Q_1+Q_3=x_1+x_5$, thus
$$
x_1+2x_2-4x_3+2x_4+x_5=(x_1+x_5-2x_3)+ 2(x_2+x_4-x_3)<0=a
$$
and~\eqref{eqner} is fulfilled.

The next case is when $\frac{x_2+x_4}{2}<x_3<x_2+x_4$.
We need to verify if all of the following holds:
\begin{align*}
x_1+2x_2-4x_3+2x_4+x_5-2 Q_j\le 0 \quad \text{ subject to }\\
Q_0\ge 0,\ x_3\ge Q_1\ge 0,\ x_3\ge Q_2\ge 0,\ x_3\ge Q_3\ge 0,\ x_3\ge Q_4\ge 0
\end{align*}
and 
\begin{align*}
x_1+2x_2-4x_3+2x_4+x_5\le 0\quad \text{ subject to } \\
\quad Q_j\le 0,\ x_3\ge Q_1,\ x_3\ge Q_2,\ x_3\ge Q_3,\ x_3\ge Q_4 
\end{align*}
for $j=0,1,2,3,4$.
This can be done using Linear Programming method. Thus $\Delta\le 0$.
\end{proof}
${}$

The next statement shows that the metrics provided by $h(x)$, $d(x)$, and $\max_{i\in S} |x_i-x_{i-1}|$, where $x\in \R^N $ are, in fact, equivalent.

\begin{lemma}\label{lembounds}
Let $x=(x_1,\dots,x_N)$ and $\Delta_i(x):=x_i-x_{i-1}$, $i\in S$. Then
\begin{align*}
\begin{array}{rcccl}
d(x) &\le& \ds\max_{i\in S} \left|\Delta_i\right| &\le&  N d(x),\\
2\, d(x)^2 &\le&  h(x)&\le& 6\,N^3\, d(x)^2.
\end{array}
\end{align*}
\end{lemma}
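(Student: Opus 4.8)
\emph{Proof plan.} The plan is to pass from the coordinates $x_i$ to the cyclic first differences $\Delta_i=\Delta_i(x)=x_i-x_{i-1}$, $i\in S$, and to reduce all four inequalities to two elementary identities together with the constraint $\sum_{i\in S}\Delta_i=0$, which holds because this sum telescopes around the cycle. The identities are
$$
2d_i(x)=|\Delta_i-\Delta_{i+1}|\quad\text{for all }i\in S,
\qquad\text{hence}\qquad d(x)=\tfrac12\max_{i\in S}|\Delta_i-\Delta_{i+1}| ,
$$
and, using $x_i-x_{i+2}=-(\Delta_{i+1}+\Delta_{i+2})$ and relabelling,
$$
h(x)=2\sum_{i\in S}\Delta_i^2+\sum_{i\in S}(\Delta_i+\Delta_{i+1})^2 .
$$
Once these are in place, only the scalar facts $|a+b|\le|a|+|b|$, $(a+b)^2\le 2a^2+2b^2$ and $2a^2+2b^2\ge(a-b)^2$ are needed.

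For the first line: the left inequality is immediate, since $2d(x)=\max_i|\Delta_i-\Delta_{i+1}|\le 2\max_i|\Delta_i|$. For the right inequality I would first observe that consecutive differences of the $\Delta_i$'s are small, $|\Delta_i-\Delta_{i+1}|=2d_i(x)\le 2d(x)$, so for any indices $i,j$ one has $|\Delta_j-\Delta_i|\le 2d(x)$ times the cyclic distance between $i$ and $j$, which is at most $\lfloor N/2\rfloor\le N/2$; hence $|\Delta_j-\Delta_i|\le N\,d(x)$. Then, invoking $\sum_{i\in S}\Delta_i=0$, I would write $\Delta_j=\frac1N\sum_{i\in S}(\Delta_j-\Delta_i)$ and take absolute values to get $|\Delta_j|\le N\,d(x)$ for every $j$.

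For the second line: choose $i^{\ast}$ with $d_{i^{\ast}}(x)=d(x)$. Keeping only the two summands $2\Delta_{i^{\ast}}^2$ and $2\Delta_{i^{\ast}+1}^2$ in the formula for $h(x)$ and applying $2a^2+2b^2\ge(a-b)^2$ gives $h(x)\ge(\Delta_{i^{\ast}}-\Delta_{i^{\ast}+1})^2=4d(x)^2\ge 2d(x)^2$. For the upper bound, $(\Delta_i+\Delta_{i+1})^2\le 2\Delta_i^2+2\Delta_{i+1}^2$ gives $h(x)\le 6\sum_{i\in S}\Delta_i^2\le 6N\max_{i\in S}\Delta_i^2$, and the already established bound $\max_i|\Delta_i|\le N\,d(x)$ then yields $h(x)\le 6N^3 d(x)^2$.

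I do not expect a genuine obstacle here: the only step beyond routine bookkeeping is the bound $\max_i|\Delta_i|\le N\,d(x)$, where a crude telescoping from a single fixed index would cost an extra factor close to $2$; averaging over all starting indices and using the zero-sum identity $\sum_{i\in S}\Delta_i=0$ is exactly what removes it, making the constant $N$ (in fact $N-1$) come out.
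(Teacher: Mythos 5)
Your proof is correct and follows essentially the same route as the paper: pass to the cyclic differences $\Delta_i$, use the identities $2d_i(x)=|\Delta_i-\Delta_{i+1}|$ and $h(x)=\sum_i\bigl[2\Delta_i^2+(\Delta_i+\Delta_{i+1})^2\bigr]$, and exploit the zero-sum constraint $\sum_i\Delta_i=0$. The only tactical difference worth noting is in the bound $\max_i|\Delta_i|\le N\,d(x)$: the paper argues by contradiction, assuming $\Delta_1>N\,d(x)$ and telescoping to force $\sum_i\Delta_i>0$, whereas you average the pairwise bounds $|\Delta_j-\Delta_i|\le 2d(x)\cdot\mathrm{dist}(i,j)$ against the zero sum, which is cleaner and, as you observe, actually gives the slightly sharper constant $N-1$; your lower bound $h(x)\ge 4d(x)^2$ via $2a^2+2b^2\ge(a-b)^2$ is likewise marginally stronger than the paper's $2d(x)^2$, though of course both suffice.
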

\begin{proof}
Note that $\Delta_1+\dots+\Delta_N=0$ and 
\begin{align*}
h(x)&=\sum_{i\in S} \left[2\Delta_i ^2+(\Delta_i+\Delta_{i+1})^2\right], \\
d(x)&=\frac 12\, \max_{i\in S} \left|\Delta_{i+1}-\Delta_i\right|.
\end{align*}
Let $j$ be such that $d_j(x)=d(x)$, then by the triangle inequality
$$
|\Delta_{j+1}|+|\Delta_j|\ge |\Delta_{j+1}-\Delta_j|=2d(x)
$$
so at least one of the two terms on the LHS $\ge d(x)$, hence  $\max_{i\in S} |\Delta_i|\ge d(x)$.

Now we will show that $\max_{i\in S} |\Delta_i|\le N d(x)$. Indeed, suppose that this is not the case, and w.l.o.g.\ $\Delta_1>N d(x)$. For all $i$ we have 
$\left|\Delta_{i+1}-\Delta_i\right|\le 2d(x)$, hence by induction and the triangle inequality we get
\begin{align*}
&\Delta_2>\left(N-2\right) d(x),\\ 
&\Delta_3>\left(N-4 \right)d(x),\\ &\dots, \
\\
&
\Delta_{N-1}>\left(N-2(N-2) \right) d(x),\\
&\Delta_N>\left(N-2(N-1) \right) d(x).
\end{align*}
As a result, $\Delta_1+\Delta_2+\dots+\Delta_N>\left[N^2-2(1+2+\dots+(N-1))\right]d(x)=N d(x) \ge 0$, which yields a contradiction, since the LHS is identically equal to $0$.

Thus $|\Delta_i|\le N d(x)$, and so 
$|\Delta_i+\Delta_{i+1}|\le 2N d(x)$ for all $i\in S$.
Consequently, $h(x)\le 2N (Nd(x))^2  +N (2Nd(x))^2=6N^3 d(x)^2$. On the other hand, 
$h(x)\ge \ds\max_{i\in S} 2\Delta_i^2\ge 2 d(x)^2$.
\end{proof}
${}$

The following four statements (Lemmas~\ref{lemspeed} and~\ref{lemupdown} and Corollaries~\ref{corr3d} and~\ref{corr_xi_decr}) show that $\xi(t)$ can actually decrease by a non-trivial factor with a positive (and bounded from below) probability.

\begin{lemma}\label{lemspeed}
Suppose that $X(t)=x=(x_1,x_2,x_3,x_4,x_5,\dots)$, and $d_3(x)\ge \max\left\{d_2(x),d_4(x)\right\}$. Let $\mu=\frac{x_2+x_4}{2}$ and $\delta=|x_3-\mu|=d_3(x)$. If $x_3$ is replaced by some $u\in [\mu-\delta/6,\mu+\delta/6]$ then $\Delta_h:=h(X(t+1))-h(X(t))\le - \frac56 \delta^2$. (Note that the Lebesgue measure of $[\mu-\delta/6,\mu+\delta/6] \bigcap [0,1]$ is always at least $\delta/6$; also after this replacement $d_3$ must decrease.)
\end{lemma}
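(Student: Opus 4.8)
The plan is a direct computation, organised around the observation that replacing the single coordinate $x_3$ by $u$ affects only the four summands of $h$ that contain $x_3$. Writing $x'=(x_1,x_2,u,x_4,x_5,\dots)$, the same four-term cancellation as in the integrand of~\eqref{eqDrift} gives
$$
\Delta_h=g(u)-g(x_3),\qquad g(v):=2(x_2-v)^2+2(v-x_4)^2+(x_1-v)^2+(v-x_5)^2 .
$$
Since $g$ is a quadratic in $v$ with leading coefficient $6$, namely $g(v)=6v^2-2Pv+C$ with $P=x_1+2x_2+2x_4+x_5$ and $C$ independent of $v$, we obtain the two-factor form
$$
\Delta_h=(u-x_3)\bigl[\,6(u+x_3)-2P\,\bigr],
$$
and the whole task reduces to showing this product is at most $-\tfrac56\delta^2$.

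First I would use the reflection symmetry: $h$ and every $d_i$ are invariant under $x\mapsto\mathbf 1-x$ (the same device as in the proof of Lemma~\ref{lemsuperm}), and this reflection maps $[\mu-\delta/6,\mu+\delta/6]$ onto the corresponding interval around the reflected midpoint; hence it is no loss of generality to assume $x_3\ge\mu$, i.e. $x_3=\mu+\delta$ (the case $\delta=0$ being trivial, since then $u=x_3$). Next I would encode the hypotheses $d_2(x)\le\delta$, $d_4(x)\le\delta$ via the error terms $e_1:=x_1-2x_2+x_3$ and $e_2:=x_5-2x_4+x_3$, which then satisfy $|e_1|,|e_2|\le 2\delta$. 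Substituting $x_1=2x_2-x_3+e_1$, $x_5=2x_4-x_3+e_2$, $x_2+x_4=2\mu$ and $x_3=\mu+\delta$ into the bracket collapses $6(u+x_3)-2P$ to $6(u-\mu)+10\delta-2(e_1+e_2)$, so that
$$
\Delta_h=(u-x_3)\bigl[\,6(u-\mu)+10\delta-2(e_1+e_2)\,\bigr].
$$

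Finally I would read off both factors on the range $u\in[\mu-\delta/6,\mu+\delta/6]$: the first factor equals $(u-\mu)-\delta\le-\tfrac56\delta<0$, while in the second factor $|6(u-\mu)|\le\delta$ and $2|e_1+e_2|\le 8\delta$, so it is at least $-\delta+10\delta-8\delta=\delta>0$; multiplying a quantity $\le-\tfrac56\delta$ by one $\ge\delta>0$ yields $\Delta_h\le-\tfrac56\delta^2$, as claimed. The two parenthetical remarks follow at once: the new value of $d_3$ equals $|u-\mu|\le\delta/6<\delta$, and since $\mu\in[0,1]$ and $\delta=|x_3-\mu|\le 1$, the set $[\mu-\delta/6,\mu+\delta/6]\cap[0,1]$ always contains a subinterval of length at least $\delta/6$. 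I do not expect a genuine obstacle here; the only points needing care are the bookkeeping in the substitution that produces the clean two-factor expression, and verifying that the reflection really does reduce the problem to the case $x_3\ge\mu$ together with the stated interval for $u$.
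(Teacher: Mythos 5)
Your proof is correct and follows essentially the same route as the paper: write $\Delta_h$ as the two-factor product $-2(x_3-u)(3u+A)$ (your bracket $6(u+x_3)-2P$ equals $2(3u+A)$), bound the linear factor below by $\tfrac56\delta$ after reducing to $x_3\ge\mu$ by the reflection $x\mapsto\mathbf 1-x$, and bound the other factor below by a positive multiple of $\delta$ using only $d_2,d_4\le\delta$. The only cosmetic difference is that where you package the neighbour constraints as error terms $e_1,e_2$ with $|e_i|\le 2\delta$, the paper states the same bound as the solution to a small constrained minimisation of $A$; both land on the identical inequality $3u+A\ge\delta/2$, and hence the identical conclusion $\Delta_h\le -2\cdot\tfrac{5\delta}{6}\cdot\tfrac{\delta}{2}=-\tfrac56\delta^2$.
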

\begin{proof}
Note that the change in $h$ equals
\begin{align*}
\Delta_h=-2(x_3-u) (3u+A),\qquad \text{ where }A= 3x_3-x_1-2x_2-2x_4-x_5.    
\end{align*}
W.l.o.g.\ assume $x_3>\mu$. Then 
$$
x_3-u\ge \mu+\delta -\left(\mu+\frac\delta6\right)=\frac{5}{6}\,\delta.
$$
At the same time, recalling that $d_3(x)\ge \max\{d_2(x),d_4(x)\}$, we obtain that
\begin{align*}
\min_{x_1,\dots,x_5\ge 0} A\qquad \text{ subject to }\qquad
x_3-\mu>\max\left\{\left|x_2-\frac{x_1+x_3}{2}\right|,\left|x_4-\frac{x_3+x_5}{2}\right|\right\}    
\end{align*}
equals $-3\mu+\delta$.
Hence
$$
3u+A\ge 3\left(\mu-\frac\delta6\right)-3\mu+\delta=\frac\delta2
$$
and thus $\Delta_h\le -2\, \frac{5\delta}6 \cdot \frac{\delta}{2}$.
\end{proof}

\begin{lemma}\label{lemupdown}
Suppose that $X(t)=x=(x_1,x_2,x_3,x_4,x_5,\dots)$, and $d_3(x)=d(x)$. Let $\mu=\frac{x_2+x_4}{2}$ and $\delta=|x_3-\mu|=d_3(x)$. Given that $x_3>\mu$, if $x_3$ is replaced by some $u\notin [\mu-3\delta,x_3]$ then $d_3(x')>d_3(x)$ and $d_3(x')$ is still the largest of $d_i(x')$, where $x'=(x_1,x_2,u,x_4,x_5,\dots)$. The same conclusion holds if $x_3<\mu$ and $x_3$ is replaced by some $u\notin [x_3,\mu+3\delta]$.
\end{lemma}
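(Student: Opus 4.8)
The plan is to use three elementary observations. First, replacing the coordinate $x_3$ by $u$ changes only $d_2$, $d_3$ and $d_4$, because $d_i(x)=\bigl|x_i-\tfrac{x_{i-1}+x_{i+1}}2\bigr|$ involves $x_3$ exactly when $i\in\{2,3,4\}$; hence $d_i(x')=d_i(x)\le d(x)=\delta$ for every $i\notin\{2,3,4\}$. Second, $\mu=\tfrac{x_2+x_4}2$ does not involve $x_3$, so $d_3(x')=|u-\mu|$. Third, the maps $u\mapsto d_2(x')=\bigl|x_2-\tfrac{x_1+u}2\bigr|$ and $u\mapsto d_4(x')=\bigl|x_4-\tfrac{u+x_5}2\bigr|$ are $\tfrac12$-Lipschitz in $u$, so, using $d_2(x),d_4(x)\le\delta$, we get the basic bound $\max\{d_2(x'),d_4(x')\}\le\delta+\tfrac12|u-x_3|$.

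Next I would reduce to the case $x_3>\mu$ by the reflection $x\mapsto(1-x_1,\dots,1-x_N)$, which leaves every $d_i$ invariant, sends $\mu\mapsto 1-\mu$ and $u\mapsto 1-u$, and therefore turns the case "$x_3<\mu$, $u\notin[x_3,\mu+3\delta]$" into the case "$x_3>\mu$, $u\notin[\mu-3\delta,x_3]$". Assume then $x_3>\mu$, so $\delta=x_3-\mu$ and $[\mu-3\delta,x_3]=[x_3-4\delta,x_3]$, and the hypothesis reads $u>x_3$ or $u<\mu-3\delta$. If $u>x_3$, write $u=x_3+\eta$ with $\eta>0$: then $d_3(x')=u-\mu=\delta+\eta>\delta=d_3(x)$, whereas $\max\{d_2(x'),d_4(x')\}\le\delta+\eta/2<d_3(x')$. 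If $u<\mu-3\delta$, write $x_3-u=4\delta+\eta$ with $\eta>0$: then $d_3(x')=\mu-u=(x_3-u)-\delta=3\delta+\eta>\delta$, whereas $\max\{d_2(x'),d_4(x')\}\le\delta+\tfrac12(4\delta+\eta)=3\delta+\eta/2<d_3(x')$. In both cases $d_3(x')>\delta\ge d_i(x')$ for every $i\ne 3$, which is exactly the assertion, and the case $x_3<\mu$ follows verbatim after the reflection.

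There is no genuinely hard step here, but the point that deserves care is that the constant $3$ is sharp for the conclusion that $d_3(x')$ stays the \emph{largest} deviation (rather than merely having increased): for $u$ in the excluded window $(\mu-3\delta,\mu-\delta)$ one still has $d_3(x')>\delta$, yet the $\tfrac12$-Lipschitz slack in the bound for $d_2(x')$ or $d_4(x')$ is enough for one of those to overtake $d_3(x')$, so the threshold cannot be lowered. One should also note the boundary value $u=\mu-3\delta$, where the bound permits a tie $d_2(x')=d_3(x')$; this is precisely why the forbidden interval is closed and is ruled out by the strict inequality $u<\mu-3\delta$ (equivalently $u\notin[\mu-3\delta,x_3]$).
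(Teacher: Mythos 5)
Your proof is correct and follows essentially the same route as the paper: you isolate the only affected terms $d_2,d_3,d_4$, bound $d_2(x'),d_4(x')\le\delta+\tfrac12|u-x_3|$ (the paper obtains the identical estimate by a direct algebraic rearrangement rather than phrasing it as $\tfrac12$-Lipschitz continuity), and split into the two subcases $u>x_3$ and $u<\mu-3\delta$, exactly as the paper does after invoking the same reflection symmetry. One small slip: your closing sentence asserts $\delta\ge d_i(x')$ for \emph{every} $i\ne 3$, which need not hold for $i\in\{2,4\}$; but this is only loose phrasing, since the lines just before already establish the stronger and correct bound $\max\{d_2(x'),d_4(x')\}<d_3(x')$, so the argument itself is complete.
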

Before presenting the proof of Lemma~\ref{lemupdown}, we state the obvious
\begin{corollary}\label{corr3d}
Let $\delta=d(\tilde X(s))$. If $i=\jmath(\tilde X(s))$ then
$$
\tilde X_i(s+1)\in [\tilde X_i(s)-4\d,\tilde X_i(s)+4\d]
$$ 
(and if $i\ne \jmath(\tilde X(s))$ then trivially $X_i(s+1)=X_i(s)$). Hence we always have
$$
\max_{i\in S} \left|\tilde X_i(s+1)-\tilde X_i(s)\right|\le 4 \d.
$$
\end{corollary}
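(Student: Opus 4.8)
The plan is to deduce Corollary~\ref{corr3d} directly from Lemma~\ref{lemupdown} by reformulating one step of the embedded chain as rejection sampling. First I would fix $s$, write $x=\tilde X(s)$, $i=\jmath(x)$, $\delta=d(x)=d_i(x)$ and $\mu=\tfrac{x_{i-1}+x_{i+1}}{2}$, and assume w.l.o.g.\ that $x_i>\mu$, so $x_i=\mu+\delta$ (the case $x_i<\mu$ is identical after reflection, with the interval $[\mu-3\delta,x_i]$ below replaced by $[x_i,\mu+3\delta]$). Throughout the original-time window $[\nu_s,\nu_{s+1})$ only coordinate $i$ is ever updated, so the coordinates $x_j$ with $j\ne i$, and hence $\mu$, remain frozen.

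The key observation is that for any time $t\in(\nu_s,\nu_{s+1}]$ the configuration is $(x_1,\dots,x_{i-1},u,x_{i+1},\dots)$ with $u=X_i(t)$ a fresh $U[0,1]$ draw, and the event that terminates the embedded step, namely $\{\jmath(X(t))\ne i\}\cup\{d(X(t))<\delta\}$, depends only on $u$ and on the frozen data — not on $t$, and not on the value currently sitting at coordinate $i$. Writing $\mathcal A\subseteq[0,1]$ for this fixed ``accepting'' set, $\tilde X_i(s+1)$ is exactly the first of the i.i.d.\ $U[0,1]$ draws that lands in $\mathcal A$, and in particular $\tilde X_i(s+1)\in\mathcal A$.

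Finally I would apply Lemma~\ref{lemupdown} to the configuration $x$ (which is legitimate since $i=\jmath(x)$ forces $d_i(x)=d(x)$): it says precisely that every $u\notin[\mu-3\delta,x_i]$ leaves $i$ as the unique maximizer of $d_j(\cdot)$ and strictly increases $d$, i.e.\ such a $u$ is rejected, so $\mathcal A\subseteq[\mu-3\delta,x_i]$. Since $\mu-3\delta=(x_i-\delta)-3\delta=x_i-4\delta$, this gives $\tilde X_i(s+1)\in[x_i-4\delta,x_i]\subseteq[\tilde X_i(s)-4\delta,\tilde X_i(s)+4\delta]$, while $\tilde X_j(s+1)=\tilde X_j(s)$ for $j\ne i$, and the $\max_{j\in S}$ bound follows. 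There is no real difficulty; the one point worth spelling out is that $\mathcal A$ is the same set at each of the (possibly several) replacements occurring inside a single embedded step — it is a property of the frozen neighbours and of the threshold $\delta$, not of the current value at $i$ — which is exactly what lets a single application of Lemma~\ref{lemupdown} do the whole job.
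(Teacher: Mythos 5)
Your proof is correct and follows the same line of reasoning as the paper, which states the corollary as ``obvious'' and justifies it only in the parenthetical remark immediately following: by Lemma~\ref{lemupdown} the accepted point lies within $3\delta$ of $\mu$, and $|\tilde X_i(s)-\mu|=\delta$. Your rejection-sampling framing makes explicit the point the paper leaves implicit --- namely that the acceptance set depends only on the frozen neighbours and the threshold $\delta$, not on the current (possibly already rejected) value at coordinate $i$, so one application of Lemma~\ref{lemupdown} to the configuration $\tilde X(s)$ governs the whole embedded step.
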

(Note that in Corollary~\ref{corr3d} we have $4\d$ for the following reason: the newly accepted point can deviate from $\mu$ by at most $3\d$ by Lemma~\ref{lemupdown}, while $|\tilde X_i(s)-\mu|=\d$.)

The next implication of Lemma~\ref{lemupdown} requires a bit of work.
\begin{corollary}\label{corr_xi_decr}
Let $\rho=1-\frac{5}{36\, N^3}<1$. Then
$$
\P\left(\xi(s+1)\le \rho \xi(s)\| \tF_s \right)\ge \frac 1{48}.
$$
\end{corollary}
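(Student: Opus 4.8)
The plan is to produce, with $\tF_s$-conditional probability at least $1/48$, a single accepted replacement at the current worst node that forces $h$ to drop by an amount of order $d(\tilde X(s))^2$, and then to turn this additive decrease into the multiplicative bound $\xi(s+1)\le\rho\,\xi(s)$ using the equivalence of $h$ and $d^2$ from Lemma~\ref{lembounds}. Set $j=\jmath(\tilde X(s))$, $\mu=\tfrac12\big(\tilde X_{j-1}(s)+\tilde X_{j+1}(s)\big)$ and $\delta=d(\tilde X(s))=|\tilde X_j(s)-\mu|$; these are all $\tF_s$-measurable, and by the reflection symmetry $u\mapsto1-u$ already exploited in Lemma~\ref{lemsuperm} we may assume $\tilde X_j(s)>\mu$, i.e.\ $\tilde X_j(s)=\mu+\delta$. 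The key observation is that on the time interval $[\nu_s,\nu_{s+1})$ only coordinate $j$ is ever resampled, and the per-step acceptance test (``$\jmath$ moves away from $j$, or $d$ falls below $\delta$'') depends only on the unchanged neighbouring coordinates; hence ``resample $U[0,1]$ at node $j$ until accepted'' produces, conditionally on $\tF_s$, a value $\tilde X_j(s+1)$ that is uniform on an acceptance set $\mathcal A=\mathcal A(\tilde X(s))\subseteq[0,1]$. By Lemma~\ref{lemupdown} every $u\notin[\mu-3\delta,\mu+\delta]$ is rejected, so $\mathcal A\subseteq[\mu-3\delta,\mu+\delta]\cap[0,1]$, a set of Lebesgue measure at most $4\delta$.

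Next I would verify $I:=[\mu-\delta/6,\mu+\delta/6]\cap[0,1]\subseteq\mathcal A$: for $u\in I$ we have $|u-\mu|\le\delta/6<\delta=d_j(\tilde X(s))$, so after replacing $\tilde X_j(s)$ by $u$ the non-conformity at $j$ strictly decreases, and therefore either $j$ remains the (now strict) maximiser and $d$ has dropped, or one of $j\pm1$ has overtaken $j$ and the location of the maximum has changed --- in either case the step is accepted. Since $\mu\in[0,1]$, the set $I$ has measure at least $\delta/6$ (with equality only when $\mu$ is within $\delta/6$ of an endpoint of $[0,1]$), whence
\begin{equation*}
\P\!\left(\tilde X_j(s+1)\in I \mid \tF_s\right)=\frac{|I|}{|\mathcal A|}\ge\frac{\delta/6}{4\delta}=\frac1{24}\ge\frac1{48}.
\end{equation*}
On the event $\{\tilde X_j(s+1)\in I\}$ the replacement is exactly the one considered in Lemma~\ref{lemspeed} applied at node $j$ (whose hypothesis $d_j\ge\max\{d_{j-1},d_{j+1}\}$ holds because $j$ maximises $d_i$), so $\xi(s+1)-\xi(s)=h(\tilde X(s+1))-h(\tilde X(s))\le-\tfrac56\delta^2$; on the other hand Lemma~\ref{lembounds} gives $\xi(s)=h(\tilde X(s))\le6N^3\delta^2$, hence $\tfrac56\delta^2\ge\tfrac{5}{36N^3}\xi(s)$. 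Combining these, on that event $\xi(s+1)\le\xi(s)-\tfrac{5}{36N^3}\xi(s)=\rho\,\xi(s)$, which together with the displayed bound gives the corollary.

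The step I expect to be the main obstacle is the identification of the conditional law of $\tilde X_j(s+1)$ as the uniform law on $\mathcal A$ together with the sandwich $I\subseteq\mathcal A\subseteq[\mu-3\delta,\mu+\delta]$: one must argue carefully that the acceptance criterion really does not change as one iterates the resampling inside $[\nu_s,\nu_{s+1})$ (so that the ``keep resampling until accepted'' description genuinely yields a uniform variable, using the a.s.\ uniqueness of $\jmath$), and then check both inclusions, the lower one needing the short case distinction about whether a neighbour of $j$ overtakes it. The boundary situations where $\mu$ lies within $\delta/6$ of $0$ or $1$ merely shrink $I$ to length $\delta/6$, which is already accounted for; everything else is routine algebra contained in the cited lemmas.
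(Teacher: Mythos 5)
Your proposal is correct and follows essentially the same route as the paper: place the replacement in $[\mu-\delta/6,\,\mu+\delta/6]$ so Lemma~\ref{lemspeed} gives an additive drop of at least $\tfrac56\delta^2$ in $h$, bound the acceptance window, take the ratio of measures, and convert to a multiplicative drop via Lemma~\ref{lembounds}. The only divergence is that you bound the acceptance set directly by Lemma~\ref{lemupdown} as $[\mu-3\delta,\mu+\delta]$ (length $4\delta$), whereas the paper uses the looser Corollary~\ref{corr3d} interval of length $8\delta$; this gives you the sharper probability $1/24$, which of course still yields the claimed $1/48$. Your more careful handling of the uniformity of the embedded update on the acceptance set, and the verification $I\subseteq\mathcal A$, is a correct expansion of a step the paper treats more briefly (having already set up the conditioned-uniform observation before Lemma~\ref{lemsuperm}).
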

\begin{proof}[Proof of Corollary~\ref{corr_xi_decr}]
From Corollary~\ref{corr3d} we know that given $x=\tilde X(s)$, the allowed range for the newly sampled point to be in $\tilde X(s+1)$ is at most $8 \d$ where $\d=d(x)$. At the same time if the newly sampled point falls into the interval $[\mu-\d/6,\mu+\d/6]$ (see Lemma~\ref{lemupdown}), at least half of which lies in $[0,1]$, then $\xi(s+1)-\xi(s)\le -\frac 56 \d^2$; the probability of this event is no less than $\frac{\d/6}{8\d}=\frac 1{48}$. Since $\xi(s)=h(x)$ and by Lemma~\ref{lembounds} we have $d(x)^2\ge \frac{h(x)}{6N^3}$, the inequality
$\xi(s+1)-\xi(s)\le -\frac 56 \d^2$ implies
$\xi(s+1)-\xi(s)\le -\frac 5{36N^3} \xi(s)$.
\end{proof}

\begin{proof}[Proof of Lemma~\ref{lemupdown}]
By symmetry, it suffices to show just the first part of the statement. First, observe that
\begin{align}\label{eqdd}
d_j(x')&=d_j(x)\le d_3(x)\text{ for } j\in S\setminus \{2,3,4\}; 
\nonumber
\\
d_2(x')&= \left|\left(\frac{x_1+x_3}{2}-x_2\right) +\frac{u-x_3}2\right|
\le d_2(x)+\left| \frac{u-x_3}2\right|\le 
d_3(x)+\left| \frac{u-x_3}2\right|.
\end{align}

If $u>x_3>\mu$, then from~\eqref{eqdd}
\begin{align*}
d_3(x')&=u-\frac{x_2+x_4}{2}>x_3-\frac{x_2+x_4}{2}=d_3(x);\\
d_2(x')&\le d_3(x)+\left| \frac{u-x_3}2\right|
=
d_3(x')-(u-x_3)+\left| \frac{u-x_3}2\right|
=d_3(x')-\left| \frac{u-x_3}2\right|<d_3(x');\\
d_4(x')&<d_3(x')\quad \text{ (by the same argument as $d_2$)}
\end{align*}
so indeed $d_3(x)<d_3(x')=\max_{i\in S} d_i(x')$.

On the other hand, if $u<\mu-3\delta<x_3=\mu+\delta$, then $d_j$ for $j\in S\setminus\{2,3,4\}$ still remain unchanged, but
\begin{align*}
d_3(x')&=\mu-u>3\delta>d_3(x);\\
d_2(x')&\le d_3(x)+\left| \frac{u-x_3}2\right|
=\delta+ \frac{x_3-u}2=\delta+ \frac{x_3-\mu}2+\frac{\mu-u}2
= \frac{3\delta}2+\frac{\mu-u}2
\\ &
< \frac{\mu-u}2+\frac{\mu-u}2
=d_3(x')
;\\
d_4(x')&< d_3(x')\quad \text{ (by the same argument as $d_2$)}
\end{align*}
hence $d_3(x)<d_3(x')=\max_{i\in S} d_i(x')$ in this case as well.
\end{proof}
${}$

At the same time, it turns out that $\xi(t)$  cannot increase too much in one step, as follows from

\begin{lemma}\label{lem_up_jump}
There is a non-random $r>0$ such that for all $s$ we have
$\xi(s+1)\le r \xi(s)$.
\end{lemma}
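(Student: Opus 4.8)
The plan is to show that a single update of $\tilde X$ can only inflate $h$ by a bounded factor, using Lemma~\ref{lembounds} to convert between $h$ and $d$, and Corollary~\ref{corr3d} to control how far the updated coordinate can move. Concretely, write $x=\tilde X(s)$, $\delta=d(x)$, and let $i=\jmath(x)$ be the (a.s.\ unique) updated node, with $x'=\tilde X(s+1)$ agreeing with $x$ except at coordinate $i$. By Corollary~\ref{corr3d}, $|x'_i-x_i|\le 4\delta$, so every term of the form $(x'_j-x'_k)^2$ appearing in $h(x')$ differs from the corresponding term of $x$ only when $j$ or $k$ lies in $\{i-2,i-1,i,i+1,i+2\}$, and for those terms the new value is at most $(|x_j-x_k|+4\delta)^2\le (\,\max_m|\Delta_m|+4\delta\,)^2\le (N\delta+4\delta)^2=(N+4)^2\delta^2$ by the first line of Lemma~\ref{lembounds}. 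Since $h$ is a sum of $2N+N=3N$ such squared differences (weighted by $1$ or $2$), we get the crude bound $h(x')\le 6N\,(N+4)^2\delta^2$.

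Next I would feed this into the lower bound $h(x)\ge 2d(x)^2=2\delta^2$ from Lemma~\ref{lembounds}, giving
\begin{align}\label{eqrjump}
\xi(s+1)=h(x')\le 6N(N+4)^2\delta^2\le 3N(N+4)^2\,h(x)=3N(N+4)^2\,\xi(s),
\end{align}
so the lemma holds with $r=3N(N+4)^2$, which is non-random. (One could of course optimise the constant — e.g.\ only the handful of terms touching node $i$ change, and one can track signs more carefully — but since Lemma~\ref{lem_up_jump} is only used qualitatively, the crude constant suffices.)

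A small subtlety worth flagging: Corollary~\ref{corr3d} already assumes the embedded dynamics, i.e.\ that the newly sampled value is either accepted because $d$ strictly decreased or because $\jmath$ changed; this is exactly the process $\tilde X$, so there is no gap there. The only genuine point to verify is that the estimate $|x'_j-x'_k|\le N\delta+4\delta$ is valid for \emph{all} pairs $(j,k)$ with $|j-k|\in\{1,2\}$ (cyclically), including the pairs not involving $i$, for which trivially $|x'_j-x'_k|=|x_j-x_k|\le N\delta$ by Lemma~\ref{lembounds}; and for the pairs involving $i$ one uses the triangle inequality together with $|x'_i-x_i|\le 4\delta$. I do not anticipate a real obstacle here — the statement is purely a one-step Lipschitz-type bound — the only mild care needed is getting the bookkeeping of which squared differences change right, and remembering to invoke the two-sided equivalence in Lemma~\ref{lembounds} in both directions (upper bound on differences via $N d(x)$, lower bound on $h$ via $2d(x)^2$).
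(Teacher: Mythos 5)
Your approach is valid and does establish the lemma, but it is genuinely different from — and cruder than — the paper's. You bound each squared difference in $h(x')$ individually via the triangle inequality, the bound $\max_m|\Delta_m|\le N\delta$ from Lemma~\ref{lembounds}, and the step‐size bound of Corollary~\ref{corr3d}, then divide by the lower bound $h(x)\ge 2\delta^2$. This gives an $r$ that grows like $N^3$. The paper instead computes the one‐step increment $\xi(s+1)-\xi(s)$ exactly (only a handful of terms in $h$ involve the replaced coordinate, and they are written out), arriving at $(12x_3-2x_2-2x_4-4x_1-4x_5)v+6v^2$ with $|v|\le 4\delta$, and then exploits the fact that the coefficient vector $(-4,-2,12,-2,-4)$ sums to zero to rewrite $12x_3-2x_2-2x_4-4x_1-4x_5$ as a linear combination of the local non-conformities $d_2,d_3,d_4$, each bounded by $\delta$. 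This yields $|\xi(s+1)-\xi(s)|\le 240\delta^2\le 120\,\xi(s)$ and hence $r=121$, independent of $N$. Since the paper later uses $r$ quantitatively (in the Discussion, via Proposition~\ref{prop_mart}, to get an explicit convergence rate), the $N$-independence matters there; for the lemma itself, either argument suffices.

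One small bookkeeping slip to fix: for the distance-two terms $(x_j-x_{j+2})^2$ in $h$, the pre-update bound is $|x_j-x_{j+2}|\le|\Delta_{j+1}|+|\Delta_{j+2}|\le 2N\delta$, not $N\delta$; so the affected terms of that type are bounded by $(2N+4)^2\delta^2$ rather than $(N+4)^2\delta^2$. With that correction your estimate becomes, say, $h(x')\le 2N(N+4)^2\delta^2+N(2N+4)^2\delta^2\le 3N(2N+4)^2\delta^2$, and hence $r=\tfrac{3}{2}N(2N+4)^2$ works. The conclusion — existence of a non-random $r$ — is unaffected.
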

\begin{proof} By Corollary~\ref{corr3d} it follows that the worst outlier (w.l.o.g.\ $x_3$) can be replaced only by a point at most at the distance $4\delta$ from $x_3$ at time $\nu_{s+1}$. Let the new value of the fitness at node~$3$ be $x_3+v$, $|v|\le 4\delta$. The change in the Lyapunov function is given by
\begin{align}\label{eq_xi_s+1}
\xi(s+1)-\xi(s)&=\left[2((x_3+v)-x_2)^2+2((x_3+v)-x_4)^2+((x_3+v)-x_1)^2+((x_3+v)-x_5)^2\right]
 \nonumber
\\ &-\left[2(x_3-x_2)^2+2(x_3-x_4)^2+(x_3-x_1)^2+(x_3-x_5)^2\right]
 \nonumber
\\
& =(12x_3-2x_2-2x_4-4x_1-4x_5)\, v+6\,v^2
%=8\left[x_2-\frac{x_1+x_3}2\right]
%\le 12 (N+1)\delta^2
\end{align}
Since
\begin{align*}
 \left|12x_3-2x_2-2x_4-4x_1-4x_5\right|&=\left|8\left(x_2-\frac{x_1+x_3}2\right)+8\left(x_4-\frac{x_5+x_3}2\right)
+20\left(x_3-\frac{x_2+x_4}2\right)\right|
\\ &\le 8\d+8\d+20\d=36\d
\end{align*}
from~\eqref{eq_xi_s+1} and the fact that $\delta=d(\tilde X(s))\le\sqrt{\frac{\xi(s)}2}$ by Lemma~\ref{lembounds}
$$
|\xi(s+1)-\xi(s)|\le 36\d\times 4\d+ 6\,(4\delta)^2= 240\d^2\le 120 \xi(s),
$$
so we can take $r=121$.
\end{proof}
${}$

Finally, we want to show that, roughly speaking, one does not have to wait for too long before~$\xi(t)$ increases or decreases by a {\em substantial} amount.

\begin{lemma}\label{lem_finite_mean}
Fix some $k>1$ and  $s_0>0$.  Let $\tau_1=\inf\{s>0:\ \xi(s_0+s)\le \xi(s_0)/k\}$ and $\tau_2=\inf\{s>0:\ \xi(s_0+s)\ge k\xi(s_0)\}$. Then~$\tau=\min(\tau_1,\tau_2)$, given $\tF_{s_0}$, is stochastically smaller than some random variable with a finite mean, the distribution of which does not depend on anything except~$N$ and~$k$.
\end{lemma}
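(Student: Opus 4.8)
The plan is to show that from any configuration there is a uniformly-bounded-below probability of seeing a "substantial" change in $\xi$ within a uniformly-bounded number of steps, so that $\tau$ is dominated by a geometric random variable whose parameters depend only on $N$ and $k$. The natural candidate for the bounded number of steps is some $m=m(N,k)$, and the natural lower bound on the probability comes from iterating Corollary~\ref{corr_xi_decr}: at each embedded step, independently of the past, $\xi$ decreases by at least a factor $\rho=1-\frac{5}{36N^3}$ with probability at least $1/48$. So first I would pick $m$ large enough that $\rho^m\le 1/k$, i.e.\ $m=\lceil \log k / \log(1/\rho)\rceil$, which depends only on $N$ and $k$. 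Then, conditionally on $\tF_{s_0}$, consider the event $E$ that $\xi$ decreases by a factor at least $\rho$ at each of the steps $s_0+1,\dots,s_0+m$. By Corollary~\ref{corr_xi_decr} and the tower property, $\P(E\mid \tF_{s_0})\ge (1/48)^m=:q>0$, a constant depending only on $N$ and $k$.

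The key point is that on $E$ we have $\xi(s_0+m)\le \rho^m\xi(s_0)\le \xi(s_0)/k$, so $\tau_1\le m$ and hence $\tau\le m$ on $E$. This, however, requires a small caveat: Corollary~\ref{corr_xi_decr} is stated for a single step from an arbitrary state, so applying it repeatedly is legitimate by the Markov property of the embedded chain $\tilde X$, regardless of whether $\tau$ has already occurred — we do not need the decrease events to be "fresh", just that each conditional probability bound holds. Thus on a set of conditional probability at least $q$, we have $\tau\le m$. Now a standard argument finishes: split time into blocks of length $m$. On each block, conditionally on the history up to its start, the probability that $\xi$ has not yet moved by a factor $k$ in either direction by the end of the block — given it had not moved by the block's start — is at most $1-q$ (because the same event $E$, with the block's start playing the role of the earlier $s_0$... here one must be slightly careful, since the relevant reference level for $\tau_1,\tau_2$ is $\xi(s_0)$, not $\xi$ at the block start; but on the event that $\tau$ has not yet occurred, $\xi$ at the block start lies in $(\xi(s_0)/k,\,k\xi(s_0))$, and the event $E$ forces a further decrease by $\rho^m\le 1/k$, pushing $\xi$ below $\xi(s_0)/k$ — so $E$ still implies $\tau_1$ occurs within that block). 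Hence $\P(\tau>jm\mid\tF_{s_0})\le (1-q)^j$ for all $j$, i.e.\ $\tau$ is stochastically dominated by $m\cdot G$ where $G$ is geometric with success probability $q$; this dominating variable has mean $m/q$, finite and depending only on $N$ and $k$.

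The main obstacle — really the only subtlety — is the bookkeeping in the previous paragraph: ensuring that within each block the event $E$ (a product of single-step decrease events, each handled by Corollary~\ref{corr_xi_decr}) genuinely forces $\xi$ below the absolute threshold $\xi(s_0)/k$ and not merely below $\rho^m$ times the current value. This works precisely because, while $\tau$ has not occurred, the current value of $\xi$ is still at most $k\,\xi(s_0)$, and $\rho^m\le 1/k$ was chosen so that $\rho^m\cdot k\,\xi(s_0)\le \xi(s_0)/k$ would need $\rho^m\le 1/k^2$; to be safe I would simply take $m$ with $\rho^m\le 1/k^2$ instead, which still depends only on $N$ and $k$, so that $E$ started from any $\xi$-value at most $k\xi(s_0)$ drives $\xi$ below $\xi(s_0)/k$, guaranteeing $\tau_1$ within the block. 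With that choice the domination by $m\cdot\mathrm{Geom}(q)$ is immediate, and everything in sight depends only on $N$ and $k$, as claimed.
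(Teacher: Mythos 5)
Your proposal is correct and follows essentially the same route as the paper's proof: choose a block length $L$ with $\rho^L \le 1/k^2$, use Corollary~\ref{corr_xi_decr} iteratively to get a probability at least $(1/48)^L$ of $L$ consecutive geometric decreases within any block, observe that such a run forces $\tau$ to occur by the block's end (since $\xi$ at the block's start is either already $\ge k\xi(s_0)$, completing $\tau_2$, or else $< k\xi(s_0)$, so that a factor-$k^{-2}$ drop lands below $\xi(s_0)/k$), and conclude $\tau$ is stochastically dominated by $L$ times a geometric random variable. Your self-correction from $\rho^m\le 1/k$ to $\rho^m\le 1/k^2$ is exactly the point the paper's choice of $L$ handles.
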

\begin{proof}
Fix a positive integer $L$. For each $t\ge s_0$ 
define 
$$
B_t=\left\{\xi(t+L)\le \frac{\xi(t)}{k^2}\right\}.
$$
It suffices to show that $\P(B_t| \tF_t)\ge p$ for some $p>0$ uniformly in $t$, since for $j=0,1,2,\dots$
\begin{align*}
B_{s_0+jL} & \subseteq \{\xi(s_0+jL)<k\xi(s_0)\text{ and } \xi(s_0+(j+1)L)<\xi(s_0)/k \}
\cup \{\xi(s_0+jL)\ge k \xi(s_0)\}
\\
&\subseteq \{\tau_1\le (j+1)L\} \cup \{\tau_2\le jL\}
\subseteq \{\tau\le (j+1)L\}.
\end{align*}
which, in turn, would imply that $\tau$ is stochastically smaller than $L$ multiplied by a geometric random variable with parameter $p=p(N,k)$.

To show that $\P(B_t\|\ \tF_t)\ge p$,  note that by Corollary~\ref{corr_xi_decr},
$$
\P(B_{m}^* \| \tF_{m-1})\ge \frac 1{48},
\quad \text{where } B_{m}^*=\left\{\xi(m)<\rho \xi(m-1)\right\},
\quad \rho=1-\frac{5}{36 N^3}.
$$
Let $L$ be so large that $\rho^L<1/k^2$. Then, on one hand,
$$
\bigcap_{m=1}^{L}B_{t+m}^*\subseteq B_t
\text{ whence } 
\P\left(B_t \| \tF_t\right)\ge 
\P\left(\bigcap_{m=1}^{L}B_{t+m}^*   \| \tF_t \right),
$$
while on the other hand
$$
\P\left(\bigcap_{m=1}^{L}B_{t+m}^* \| \tF_t\right)\ge \frac 1{48^L}=:p
$$
which depends on $N$ and $k$ only.%\footnote{One can set approximately $p=k^{\frac{2\ln 48}{\ln(1-5/(36N^3))}}\approx k^{-46N^3}$ for large $N$.}.
\end{proof}
${}$

The proof of the next statement, which completes the first part of the proof of the main theorem, requires a bit more work than that of Lemma~2.4 in~\cite{GVW}. In fact, we will prove a stronger statement (Corollary~\ref{corr_expfast}) later, however, it is still useful to see a fairly quick proof of the following

\begin{lemma}\label{lem_xi->0}
$\xi(s)\to 0$ a.s.\ as $s\to\infty$ (and as a result $\Delta_i(\tilde X(s))\to 0$ a.s. and $d(\tilde X(s))\to 0$ a.s. as $s\to\infty$).
\end{lemma}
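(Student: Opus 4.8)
The plan is to read the conclusion off the supermartingale structure established in Lemma~\ref{lemsuperm}. Since $\tilde X(s)\in[0,1]^N$ we have $0\le\xi(s)=h(\tilde X(s))\le 3N$, so $\xi(s)$ is a bounded non-negative supermartingale and therefore converges a.s.\ (and in $L^1$) to some finite random variable $\xi_\infty\ge 0$. The entire task then reduces to showing that $\xi_\infty=0$ a.s.

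To identify the limit I would invoke Corollary~\ref{corr_xi_decr}: with $\rho=1-\frac{5}{36N^3}<1$ one has $\P(\xi(s+1)\le\rho\,\xi(s)\| \tF_s)\ge\frac1{48}$ for every $s$. Setting $D_s=\{\xi(s+1)\le\rho\,\xi(s)\}\in\tF_{s+1}$, we get $\sum_s\P(D_s\| \tF_s)=\infty$ a.s., so the conditional (L\'evy) Borel--Cantelli lemma yields that $D_s$ occurs for infinitely many $s$, a.s. Along such a random subsequence $s_k\uparrow\infty$ we have $\xi(s_k+1)\le\rho\,\xi(s_k)$; letting $k\to\infty$ and using $\xi(s)\to\xi_\infty$ on both sides gives $\xi_\infty\le\rho\,\xi_\infty$, and since $1-\rho>0$ this forces $\xi_\infty=0$ a.s. (If one prefers to avoid the conditional Borel--Cantelli lemma, the same can be extracted from Lemma~\ref{lem_finite_mean} together with Lemma~\ref{lem_up_jump}: on the event $\{\xi_\infty>0\}$ the process $\xi$ would eventually be confined to a narrow band $[\xi_\infty/2,\,2\xi_\infty]$, which contradicts the fact that, uniformly in the current time, $\xi$ drops below any fixed fraction of its present value within a random time of finite mean.) Finally, the stated consequences follow from Lemma~\ref{lembounds}: from $2\,d(\tilde X(s))^2\le h(\tilde X(s))=\xi(s)\to 0$ we obtain $d(\tilde X(s))\to 0$ a.s., and hence $\max_{i\in S}|\Delta_i(\tilde X(s))|\le N\,d(\tilde X(s))\to 0$ a.s.

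I do not expect a genuine obstruction here. The one delicate point is the passage from ``$\xi$ decreases by a fixed factor with probability at least $\frac1{48}$ at every step'' to ``$\xi$ decreases by that factor infinitely often a.s.'', which needs the conditional second Borel--Cantelli lemma, the events $D_s$ being far from independent; everything else is a short limit argument. The genuinely harder work is deferred to Corollary~\ref{corr_expfast}, where in addition one must control the waiting time between successive substantial drops of $\xi$ --- which is precisely what Lemmas~\ref{lem_finite_mean} and~\ref{lem_up_jump} are designed to supply.
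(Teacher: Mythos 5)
Your proof is correct and rests on the same core ingredients as the paper's---supermartingale convergence from Lemma~\ref{lemsuperm}, the one-step decrease bound from Corollary~\ref{corr_xi_decr}, and L\'evy's conditional Borel--Cantelli lemma---but you apply the last of these in the opposite direction. You invoke the divergence half: since $\P(D_s\mid\tF_s)\ge 1/48$ for every $s$, the conditional sum diverges a.s., hence $D_s=\{\xi(s+1)\le\rho\,\xi(s)\}$ occurs infinitely often, and passing to the limit along that random subsequence (both $\xi(s_k)$ and $\xi(s_k+1)$ tend to $\xi_\infty$, since any subsequence of a convergent sequence has the same limit) yields $\xi_\infty\le\rho\,\xi_\infty$, so $\xi_\infty=0$. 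The paper instead fixes $\eps>0$ and $T$, introduces the stopping time $\sigma_{\eps,T}=\inf\{s\ge T:\xi(s)\le\eps\}$, observes that the event $A_{s+1}=\{\xi(s+1)\le\xi(s)-(1-\rho)\eps\}$ can occur only finitely often because $\xi$ converges, and then uses the finiteness half of L\'evy's lemma to conclude $\sum_s 1_{s<\sigma_{\eps,T}}<\infty$ a.s., hence $\sigma_{\eps,T}<\infty$ and $\liminf_s\xi(s)\le\eps$. Both routes are valid; yours is shorter and avoids the auxiliary stopping time and the $\eps$--$T$ bookkeeping. Your alternative sketch via Lemmas~\ref{lem_finite_mean} and~\ref{lem_up_jump} would also work but is heavier machinery than needed here, and, as you rightly note, it is better deployed in Corollary~\ref{corr_expfast}.
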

\begin{proof}
From Lemma~\ref{lemsuperm} it follows that $\xi(s)$ converges a.s.\ to a non-negative limit, say $\xi_\infty$. Let us show that $\xi_\infty=0$.
From Corollary~\ref{corr_xi_decr} we have
\begin{align}\label{eqdown}
\P\left(\xi(s+1)\le \rho \xi(s)\, |\, {\cal F}_s\right)\ge \frac 1{48}.
\end{align}

Fix an $\eps>0$ and a $T\in\N$. Let $\sigma_{\eps,T}=\inf\{s\ge T:\  \xi(s)\le \eps\}$. Then~\eqref{eqdown} implies
\begin{align*}
\P(A_{s+1}\,|\, {\cal F}_s)&\ge \frac{1_{s<\sigma_{\eps,T}}}{48},\quad
\text{where }A_{s+1}=\left\{\xi(s+1)\le \xi(s)-(1-\rho)\eps \right\}
\end{align*}
%for $s\ge T$. 
(Compare this with the inequality~(2.18) in~\cite{GVW}).
From the non-negativity of $\xi(s)$, 
%From the convergence of $\xi(s)$, 
we know that only finitely many of $A_s$ can occur. By the Levy's extension to the Borel-Cantelli lemma, we get that $\sum_{s=T}^\infty \P(A_{s+1}\,|\, {\cal F}_s)<\infty$ a.s., and hence $\sum_{s=T}^\infty 1_{s<\sigma_{\eps,T}}<\infty$. This, in turn, implies that $\sigma_{\eps,T}<\infty$ a.s. Consequently, since $T$ is arbitrary,
$$
\liminf_{s\to\infty}\xi(s)\le \eps\quad \text{a.s.}
$$
Since $\eps>0$ is also arbitrary and $\xi(s)$ converges, $\lim_{s\to\infty} \xi(s)=\liminf_{s\to\infty}\xi(s)=0$ a.s.
\end{proof}
${}$

The next general statement may be known, but since we could not find it in the literature, we present its fairly short proof. We need it in order to show that $\xi(t)$ converges to zero quickly.  

\begin{prop}\label{prop_mart}
Suppose that $\xi(s)$ is a positive bounded supermartingale with respect to a filtration~$\tF_s$. Suppose there is a constant $r>1$ such that $\xi(s+1)\le r \xi(s)$ a.s.\ and that for all $k$ large enough 
the stopping times
$$
\tau_s=\inf\{t>s:\ \xi(t)>k \xi(s)\text{ or } \xi(t)<k^{-1} \,\xi(s)\}
$$
are stochastically bounded above by some finite--mean random variable $\bar\tau>0$, which depends on~$k$ only (and, in particular, independent of $\tF_s$). Let $\mu=\E \bar \tau<\infty$. Then 
\begin{align*}
\limsup_{s\to\infty} \frac{\ln \xi(s)}{s}\le -\frac 1{4\mu}<0\qquad\text{a.s.}
\end{align*}
\end{prop}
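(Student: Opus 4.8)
The plan is to run the standard "supermartingale with geometrically-controlled excursions" argument, tracking the logarithm of $\xi$ along the stopping times $\tau_s$. Fix $k>1$ large enough that the hypotheses apply; set $\sigma_0=0$ and recursively $\sigma_{n+1}=\tau_{\sigma_n}$, so that at each renewal epoch $\xi$ has either multiplied by at least $k$ or divided by at least $k$, and the increments $\sigma_{n+1}-\sigma_n$ are stochastically dominated by i.i.d.\ copies of $\bar\tau$. First I would record the elementary inclusion $\{\sigma_{n+1}>\sigma_n+m\}\subseteq\{\xi \text{ stays within a factor }k \text{ of }\xi(\sigma_n)\text{ for }m\text{ steps}\}$, which together with the bound $\xi(s+1)\le r\xi(s)$ and $\xi>0$ gives a crude deterministic control $|\ln\xi(\sigma_{n+1})-\ln\xi(\sigma_n)|\le C(k,r,\bar\tau)$ on the log-increment across one renewal block — this integrability is what lets us apply optional stopping and the law of large numbers.

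The core step is to show that $\ln\xi$ has a strictly negative drift per renewal block. Because $\xi$ is a positive supermartingale, $\E(\xi(\sigma_{n+1})\mid\tF_{\sigma_n})\le\xi(\sigma_n)$; but $\xi(\sigma_{n+1})$ is, up to the small overshoot controlled by $\xi(s+1)\le r\xi(s)$, either $\ge k\,\xi(\sigma_n)$ or $\le k^{-1}\xi(\sigma_n)$. Writing $p_n=\P(\xi(\sigma_{n+1})\ge k\xi(\sigma_n)\mid\tF_{\sigma_n})$ for the "up" probability, the supermartingale inequality forces $p_n\cdot k + (1-p_n)\cdot 0 \le \E(\xi(\sigma_{n+1})/\xi(\sigma_n)\mid\tF_{\sigma_n})\le 1$ once one is slightly careful about the overshoot constants, hence $p_n\le 1/k+o(1)$; choosing $k$ large makes the up-probability small. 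Consequently
\begin{align*}
\E\bigl(\ln\xi(\sigma_{n+1})-\ln\xi(\sigma_n)\mid\tF_{\sigma_n}\bigr)\le p_n\ln(rk)-(1-p_n)\ln k \le -\tfrac12\ln k
\end{align*}
for all $k$ large, uniformly in $n$. So $M_n:=\ln\xi(\sigma_n)+\tfrac12 n\ln k$ is a supermartingale with increments bounded in $L^1$ (by the crude bound above), and by the law of large numbers for such supermartingales (or by writing $\ln\xi(\sigma_n)$ as a bounded-increment supermartingale and applying the martingale SLLN) we get $\limsup_{n\to\infty}\frac{1}{n}\ln\xi(\sigma_n)\le-\tfrac12\ln k$ a.s.

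Finally I would transfer this from the renewal scale $n$ back to the time scale $s$. By the strong law for the i.i.d.-dominated increments $\sigma_{n+1}-\sigma_n$, we have $\sigma_n/n\to c$ for some $c\le\mu=\E\bar\tau$ a.s.\ (more precisely $\limsup\sigma_n/n\le\mu$), and between consecutive renewals $\ln\xi$ moves by at most the block bound, which is $o(n)$ when averaged. Hence for $s\in[\sigma_n,\sigma_{n+1})$,
\begin{align*}
\frac{\ln\xi(s)}{s}\le\frac{\ln\xi(\sigma_n)+C(k,r,\bar\tau)}{\sigma_n}
\longrightarrow\frac{-\frac12\ln k}{c}\le-\frac{\ln k}{2\mu},
\end{align*}
and a final optimization over the admissible $k$ (taking $k$ with $\ln k$ close to $2$, say, or just noting $\frac{\ln k}{2\mu}\ge\frac{1}{4\mu}$ for a suitable choice, which is all that is claimed) yields $\limsup_{s\to\infty}\frac{\ln\xi(s)}{s}\le-\frac{1}{4\mu}<0$ a.s.

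I expect the main obstacle to be the bookkeeping around the \emph{overshoot} at the stopping time $\tau_s$: the value $\xi(\sigma_{n+1})$ is not exactly $k^{\pm1}\xi(\sigma_n)$ but can exceed $k\xi(\sigma_n)$ by a factor up to $r$, and one must check that this does not destroy either the negative-drift estimate for $\ln\xi$ or the $L^1$-boundedness of the increments needed for the supermartingale SLLN. This is where the hypothesis $\xi(s+1)\le r\xi(s)$ is essential and must be invoked carefully; the rest is routine renewal-theory and martingale-convergence machinery.
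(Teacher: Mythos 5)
Your overall strategy is the same as the paper's: build the renewal-type stopping times (your $\sigma_n$ are the paper's $\eta_n$), use optional stopping on the bounded supermartingale $\xi$ to get $\P(\text{up exit}\mid\tF_{\sigma_n})\le 1/k$, convert this into a strictly negative drift for $\ln\xi$ per renewal block, apply a strong law to conclude $\limsup_n \ln\xi(\sigma_n)/n \le -\frac12\ln k$, and finally transfer to the time scale $s$ using the renewal SLLN and the in-block bound $\xi(s)\le k\,\xi(\sigma_n)$. That is the paper's proof in all essentials.

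However, there is a genuine gap in the way you justify the strong-law step. You assert a \emph{two-sided} deterministic control $|\ln\xi(\sigma_{n+1})-\ln\xi(\sigma_n)|\le C(k,r,\bar\tau)$ and later appeal to an "$L^1$-bounded" or "bounded-increment" supermartingale SLLN. This two-sided control does not follow from the hypotheses: the condition $\xi(s+1)\le r\xi(s)$ bounds growth but says nothing about decay, so on the down-exit $\xi(\sigma_{n+1})$ can be arbitrarily small relative to $\xi(\sigma_n)$, and the log-increment can be arbitrarily negative. Consequently neither the claimed a.s.\ bound nor the $L^1$/bounded-increment hypothesis for a martingale SLLN is available, and you have also misidentified the danger in your closing remarks (you worry about the overshoot above by a factor $r$, which is in fact harmless and fully controlled; it is the unbounded undershoot below that breaks your two-sided claim). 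The paper avoids the issue by dominating only from above: it shows $\log_k\bigl(\xi(\eta_{n})/\xi(\eta_{n-1})\bigr)$ is stochastically bounded above by i.i.d.\ two-valued variables $X_n\in\{-1,\,1+\log_k r\}$ with $\P(X_n=1+\log_k r)=1/k$ and $\E X_n<-\tfrac12$, and then applies the ordinary SLLN to $\sum X_n$; since only a $\limsup$ upper bound is needed, the unbounded negative excursions pose no problem. You should replace your $L^1$/bounded-increment argument by this one-sided stochastic-domination argument. Separately, the inequality $\frac{\ln\xi(s)}{s}\le\frac{\ln\xi(\sigma_n)+C}{\sigma_n}$ for $s\in[\sigma_n,\sigma_{n+1})$ has the wrong sign direction once the numerator is negative (dividing a negative quantity by the smaller $\sigma_n$ makes it more negative, not less); the paper instead uses $N_s=\max\{n:\eta_n\le s\}$ together with $s\le 2\mu N_s$ for large $s$ to get the inequality in the right direction.
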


\begin{proof}
First, observe that by the Optional Stopping Theorem
\begin{align}\label{eqxiless}
\E (\xi(\tau_s)\|\tF_s) \le \xi(s)
\end{align}
(where $\tau_s<\infty$ a.s.\ by the stochastic dominance condition) while, on the other hand,
\begin{align}\label{eqximore}
\E (\xi(\tau_s)\|\tF_s)&=\E (\xi(\tau_s),\xi(\tau_s)>k \xi(s)\|\tF_s)+\E (\xi(\tau_s),\xi(\tau_s)<k^{-1}\ \xi(s)\|\tF_s)\nonumber
\\ &
\ge \E (\xi(\tau_s),\xi(\tau_s)>k \xi(s)\|\tF_s)
\ge k\xi(s) \cdot \P(\xi(\tau_s)> k \xi(s)\|\tF_s).
\end{align}
From~\eqref{eqxiless}  and \eqref{eqximore} we conclude
\begin{align}\label{eq_def_p}
p:=\P(\xi(\tau_s)>k \xi(s)\|\tF_s)<\frac 1{k}.
\end{align}
Now let us define a sequence of stopping times as follows: $\eta_0=0$ and for $n=1,2,\dots$,
\begin{align*}
\eta_{n}=\inf\left\{s>\eta_{n-1}:\ \xi(s)>k \xi(\eta_{n-1}) \text{ or } \xi(s)<k^{-1}\ \xi(\eta_{n-1}) \right\}
\end{align*}
and let
$$
N_s=\max\{n:\ \eta_n\le s\}.
$$
From the definition of the stopping times $\eta$, it follows 
\begin{align}\label{eq_xi_k_N}
\xi(s)\le k \xi(\eta_{N_s}),\qquad  \xi(\eta_{n+1})\le rk \xi(\eta_n).
\end{align}

Consider now the sequence of random variables $\xi(\eta_n)$. From~\eqref{eq_def_p} and~\eqref{eq_xi_k_N} we obtain that 
$\log_k \frac{\xi(\eta_n)} { \xi(\eta_{n-1})}$ is stochastically bounded above by a random variable $X_n\in\{-1,1+\log_k r\}$ such that
\begin{align*}
1-\P(X_n=-1)=\P(X_n=1+\log_k r)=\frac 1{k}
\end{align*}
yielding
$$
\E X_n =\frac {2+\frac{\ln r}{\ln k}}{k} -1 =:g(r,k);
$$
we can also assume that $X_n$ are i.i.d. One can choose $k>1$ so large\footnote{if $r>4.1$, then $k=\ln(r)$ will be sufficient.}   that  $g(r,k)<-\frac 12$. Then, by the Strong Law applied to $\sum_{i=1}^n X_i$, we get
$$
\limsup_{n\to\infty} \frac{\log_k \xi(\eta_n)}{n} \le \limsup_{n\to\infty} \frac{X_1+\dots+X_n}{n} <  -\frac12
\qquad\text{a.s.}
$$

From the condition of the proposition we know that the differences $\eta_n-\eta_{n-1}$, $n=1,2,\dots,$ are stochastically bounded by independent random variables with the distribution of $\bar\tau$ with $\E \bar\tau =:\mu<\infty$. Then by the Strong Law for renewal processes (see e.g.~\cite{DUR}, Theorem~I.7.3) applied to the sum of independent copies of $\bar\tau$, we get
\begin{align}\label{eq_renewal}
\liminf_{s\to\infty}\frac{N_s}{s}\ge \frac 1{\mu}\qquad\text{a.s.}
\qquad
\Longrightarrow
\qquad
s\le 2\mu N_s \text{ for all large enough }s.
\end{align}

Combining~\eqref{eq_xi_k_N} and \eqref{eq_renewal}, we get
\begin{align*}
\limsup_{s\to\infty} \frac{\log_k \xi(s)}{s}
&\le
\limsup_{s\to\infty} \frac{\log_k \left( k\xi(\eta_{N_s}) \right)  }{s}
=
\limsup_{s\to\infty} \frac{\log_k \xi(\eta_{N_s})}{s}
\\ &
\le \limsup_{s\to\infty} \frac{\log_k \xi(\eta_{N_s})}{2\mu N_s}
= \frac{1}{2\mu} \limsup_{n\to\infty} \frac{\log_k \xi(\eta_n)}{n}
\le -\frac{1}{4\mu} \qquad\text{a.s.}
\end{align*}
since 
%the numerators above are negative, and  
$N_s\to\infty$ when $s\to\infty$ a.s.
\end{proof}

The next statement strengthens Lemma~\ref{lem_xi->0}.
\begin{corollary}\label{corr_expfast}
$\xi(s)\to 0$ exponentially fast as $s\to\infty$.
\end{corollary}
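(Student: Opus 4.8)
The plan is to derive Corollary~\ref{corr_expfast} as an immediate consequence of Proposition~\ref{prop_mart} applied to $\xi(s)=h(\tilde X(s))$; the only work is to confirm that the hypotheses of that proposition have already been verified in the preceding lemmas.

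First, by Lemma~\ref{lemsuperm}, $\xi(s)$ is a non-negative supermartingale with respect to $\tF_s$, and it is bounded: since all fitnesses lie in $[0,1]$ we have $d(\tilde X(s))\le 1$, so Lemma~\ref{lembounds} gives $\xi(s)=h(\tilde X(s))\le 6N^3 d(\tilde X(s))^2\le 6N^3$. It is positive a.s.\ as well --- since $\zeta$ is continuous and $X(0)$ satisfies $d(X(0))>0$, an easy induction (at each step one coordinate is resampled from a conditionally uniform, hence continuous, distribution on a non-degenerate interval) shows that $\P(\xi(s)=0)=0$ for every $s$. Second, the multiplicative one-step bound $\xi(s+1)\le r\,\xi(s)$ with $r=121>1$ is precisely Lemma~\ref{lem_up_jump}. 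Third, for every $k>1$ --- hence for all $k$ large enough, as required --- Lemma~\ref{lem_finite_mean} with $s_0=s$ shows that the stopping time $\tau_s=\inf\{t>s:\ \xi(t)>k\xi(s)\text{ or }\xi(t)<k^{-1}\xi(s)\}$ is stochastically dominated by a finite-mean random variable $\bar\tau$ whose distribution depends only on $k$ (and the fixed $N$), so in particular is independent of $\tF_s$.

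Applying Proposition~\ref{prop_mart} with $\mu=\E\bar\tau<\infty$, we obtain
\[
\limsup_{s\to\infty}\frac{\ln\xi(s)}{s}\le -\frac1{4\mu}<0\qquad\text{a.s.},
\]
which means that for a.e.\ $\omega$ there is a finite $s_1=s_1(\omega)$ with $\xi(s)\le e^{-s/(8\mu)}$ for all $s\ge s_1$; this is exactly the claimed exponential decay of $\xi(s)\to 0$. There is essentially no obstacle in the argument: all the nontrivial estimates (the supermartingale property, the no-big-jump bound, and the finite-mean excursion time) were the content of Lemmas~\ref{lemsuperm}, \ref{lem_up_jump} and~\ref{lem_finite_mean}, and here we merely assemble them. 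The only point requiring a word of care is the strict positivity of $\xi$ needed to quote Proposition~\ref{prop_mart} verbatim, which is covered by the measure-zero observation above (and is anyway vacuous on the null event where $\xi$ could hit $0$).
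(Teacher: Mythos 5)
Your proof is correct and takes essentially the same route as the paper: Corollary~\ref{corr_expfast} is deduced by applying Proposition~\ref{prop_mart} to $\xi(s)=h(\tilde X(s))$, with the multiplicative one-step bound coming from Lemma~\ref{lem_up_jump} and the finite-mean excursion time from Lemma~\ref{lem_finite_mean}. Your version is just more explicit about checking the remaining hypotheses (supermartingale via Lemma~\ref{lemsuperm}, boundedness via Lemma~\ref{lembounds}, strict positivity by continuity of the resampling distribution), which the paper leaves implicit.
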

\begin{proof}
The statement follows immediately from Proposition~\ref{prop_mart}: the bound for $r$ we have by Lemma~\ref{lem_up_jump}; the other condition follows from Lemma~\ref{lem_finite_mean}.
\end{proof}
${}$

Now we are ready to finish the proof of the main statement.

\begin{proof}[Proof of Theorem~\ref{thm_main_alt}]
According to Corollary \ref{corr_expfast} there exist $a,b>0$ which are a.s.\ finite and such that $\xi(t)\le ae^{-bt}$. If we take $s_0$ such that $ae^{-bs}\le \epsilon$ for all $s\ge s_0$ then if $s_0\le s <t$, 
\begin{align}\label{eqcauchy}
|\tilde X_i(t)-\tilde X_i(s)|&\le  \sum_{k=s+1}^{t} 4\,d(\tilde X(k))\le  \sum_{k=s+1}^{t} \sqrt{8\xi(k)} \nonumber \\ 
&\le \sqrt{8\epsilon} \sum_{k=s+1}^{t} e^{-bk/2}\le  \frac{\sqrt{8\epsilon}}{1-e^{-b/2}},
\end{align}
where we used Corollary~\ref{corr3d} in the first inequality and Lemma~\ref{lembounds} in the second inequality. We can thus conclude that $\{\bar X_i(t)\}_t$ is a Cauchy sequence in the a.s.\ sense; therefore the limit $\bar X_i(\infty)=\lim_{t\to\infty} \tilde X_i(t)$ exists a.s. Moreover, by letting $t\to\infty$ in~\eqref{eqcauchy}, we get that
$|\tilde X_i(s)-\tilde X_i(\infty)|\le C e^{-bs/2}$ for some $C>0$.

Furthermore, assuming w.l.o.g.\  that $i<j$,
\begin{align*}
|\bar X_i(\infty)-\bar X_j(\infty) | &=\lim_{t\to\infty} |\tilde X_i(t)-\tilde X_j(t)| 
\le \lim_{t\to\infty} \sum_{k=i+1}^{j} \left|\Delta_k( \tilde X(t) )\right| =0
\end{align*}
by Lemma~\ref{lem_xi->0}, which completes the proof.
\end{proof}

\section{Discussion and open problems}
One may be interested in the speed of convergence, established in   Theorem~\ref{thm_main_alt}. In Lemma~\ref{lem_up_jump} we can take $r=121$ and from the proof of Proposition~\ref{prop_mart}, $k=\ln r=\ln (121)=2\ln(11)$ will be sufficient. Then, for Lemma~\ref{lem_finite_mean}, find $L$ such that
$$
\left(1-\frac{5}{36N^3}\right)^L<\frac{1}{23}<\frac1{k^2}
$$
We can take, e.g.,
$$
L\approx 7.2 N^3 \cdot \ln(23)\approx 22.6 N^3
$$
This, in turn, will provide a bound on $\mu=\E\bar \tau\le \frac{ L}p=L\cdot 48^L$ for Proposition~\ref{prop_mart}, and hence the speed of the convergence for large $s$:
$$
2\, [d(\tilde X(s))]^2\le h(\tilde X(s))=\xi(s)\le k^{-\frac{s}{4\mu}}\le \exp\left\{\ds-\frac{s}{8\, L\, 48^L\, \ln(11)}\right\}
\approx
\exp\left\{\ds-\frac{s}{433 \cdot 10^{38 N^3}}\right\}
$$
This bound is, however,  far from the optimal one. The simulations seem to indicate that, depending on $N$, $$\xi(s)\sim e^{-\rho_N s},$$ where e.g.\
 $\rho_5\in (0.47,0.77)$,
 $\rho_{10}\in (0.14,0.23)$,
 $\rho_{20}\in (0.02,0.03)$,
 $\rho_{40}\in (0.003,0.006)$, suggesting that (a) $\rho_N$ can be, in fact, random, and (b) the average value of $\rho_N$ decays roughly like $5/N^{2}$. We leave the study of the properties of $\rho_N$ for further research.

We believe that the convergence, described by Theorems~\ref{thm_main} and~\ref{thm_main_alt} holds for a much more general class of replacement distributions $\zeta$, not just uniform; for example, for the continuous distributions with the property that their density is uniformly bounded away from zero. Unfortunately, our proof is based on the construction of the Lyapunov function which cannot be easily transferred to  other cases (obviously, it will work for any $\zeta\sim U[a,b]$, where $a<b$). 

One can also attempt to generalize the theorems for more general graphs as described in Remark~\ref{rem1}; this should be done, however, with care, as it will not work for all the distributions (see Remark~\ref{rem2}).

%\section{Acknowledgement}
%We are grateful to the anonymous referee for many good suggestions and comments.
%
%The research is partially supported by the grants from the Swedish Research Council and the Crafoord foundation.

\begin{thebibliography}{99}
\bibitem{BS} Bak, P.,  and Sneppen, K. (1993). Punctuated equilibrium and criticality in a simple model of evolution. {\em Physical Review Letters} {\bf 71}, 4083--4086.

\bibitem{BK}
Barbay, J., and Kenyon C. (2001). On the discrete Bak-Sneppen model of self-organized criticality.
{\em Proceedings Of The Twelth Annual ACM-SIAM Symposium On Discrete Algorithms (SODA)}, Washington DC.

\bibitem{Ben} Ben-Ari, I., Silva, R. (2018). On a local version of the Bak-Sneppen model. {\em J.\ Stat.\ Phys.} {\bf 173}, 362--380. 

\bibitem{BDB}  Bernheim, B. D. (1994). A theory of conformity. {\em Journal of Political Economy} { \bf 102}, 841--877,

\bibitem{DUR} Durrett, R. (2010). Probability: Theory and Examples. Cambridge Series in Statistical and Probabilistic Mathematics.

\bibitem{GVW} Grinfeld, M., Volkov, S., Wade, A.~R. (2015). Convergence in a multidimensional randomized Keynesian beauty contest. {\em Adv.\ in Appl.\ Probab.}~{\bf 47}, 57--82. 

\bibitem{HCW} Teck-Hua Ho, Colin Camerer and Keith Weigelt. (1998).  Iterated Dominance and Iterated Best Response in Experimental ``p-Beauty Contests'' {\em The American Economic Review} {\bf 88}, 947--969.

\bibitem{KV1} Kennerberg, P., and  Volkov, S.   (2018). Jante's law process. {\em Adv.\ in Appl.\ Probab.} {\bf 50}, 414--439. 

\bibitem{KV2}  Kennerberg, P. and  Volkov, S. (2020). Convergence in the $p$-contest. {\em Journal of Statistical Physics} {\bf 178}, 1096--1125.

\bibitem{MN}
Morrison, C., Naumov, P. (2020). Group Conformity in Social Networks. {\em Journal of Logic, Language and Information} {\bf 29}, 3--19. 

\bibitem{MZ} Meester, R., and Znamenski, D. (2003). Limit behavior of the Bak-Sneppen evolution model. {\em Ann.\ Probab.} {\bf 31},  1986--2002. 

\bibitem{AS} Sandemose, A. (1936). A fugitive crosses his tracks. translated by Eugene Gay-Tifft. New York: A.\ A.\ Knopf.

\bibitem{TWS}  Tang, J.,  Wu, S., and  Sun. J. (2013). Confluence: Conformity Influence in Large Social Networks. {\em KDD’13}, August 11--14, 2013, Chicago, Illinois, USA.

\bibitem{Veer} Veerman, J., Prieto, F. (2014). On rank driven dynamical systems. {\em J.\ Stat.\ Phys.} {\bf 156} 455--472. 

\bibitem{SVBS}  Volkov, S. (2020). Rigorous upper bound for the discrete Bak-Sneppen model. {\sf https://arxiv.org/abs/2003.00222},  preprint.
\end {thebibliography}
\end{document}